\def\@secnumfont{\bfseries\scshape}
\def\section{\@startsection{section}{1}%
  \z@{.7\linespacing\@plus\linespacing}{.5\linespacing}%
  {\normalfont\large\bfseries\scshape\centering}}
\def\subsection{\@startsection{subsection}{2}%
  \z@{.5\linespacing\@plus.7\linespacing}{-.5em}%
  {\normalfont\bfseries\scshape}}
\def\subsubsection{\@startsection{subsubsection}{3}%
  \z@{.5\linespacing\@plus.7\linespacing}{-.5em}%
  {\normalfont\scshape}}
\def\specialsection{\@startsection{section}{1}%
  \z@{\linespacing\@plus\linespacing}{.5\linespacing}%
  {\normalfont\centering\large\bfseries\scshape}}
\renewenvironment{proof}[1][\proofname]{\par
\pushQED{\qed}%
\normalfont \topsep4\p@\@plus4\p@\relax
\trivlist
\item[\hskip\labelsep
\bfseries
#1\@addpunct{.}]\ignorespaces
}{%
\popQED\endtrivlist\@endpefalse
}
\newcommand \Dotfill {\leavevmode \leaders \hb@xt@ 6pt{\hss .\hss }\hfill \kern \z@}
\def\@tocline#1#2#3#4#5#6#7{\relax
  \ifnum #1>\c@tocdepth 
  \else
    \par \addpenalty\@secpenalty\addvspace{#2}%
    \begingroup \hyphenpenalty\@M
    \@ifempty{#4}{%
      \@tempdima\csname r@tocindent\number#1\endcsname\relax
    }{%
      \@tempdima#4\relax
    }%
    \parindent\z@ \leftskip#3\relax \advance\leftskip\@tempdima\relax
    \rightskip\@pnumwidth plus4em \parfillskip-\@pnumwidth
    #5\leavevmode\hskip-\@tempdima
      \ifcase #1
       \or\or \hskip 1.65em \or \hskip 3.3em \else \hskip 4.95em \fi%
      #6\nobreak\relax
    \Dotfill
    \hbox to\@pnumwidth{\@tocpagenum{#7}}\par
    \nobreak
    \endgroup
  \fi}
\def\l@section{\@tocline{1}{0pt}{1pc}{}{\scshape}}
\renewcommand{\tocsection}[3]{%
\indentlabel{\@ifnotempty{#2}{\ignorespaces#1 #2.\hskip 0.7em}}#3}
\def\l@subsection{\@tocline{2}{0pt}{1pc}{5pc}{}}
\def\l@subsubsection{\@tocline{3}{0pt}{1pc}{7pc}{}}
\numberwithin{equation}{section}
\newtheoremstyle{mytheorem}{.7\linespacing\@plus.3\linespacing}{.7\linespacing\@plus.3\linespacing}%
     {\itshape}
     {}
     {\bfseries}
     {. }
     {0.3ex}
     {\thmname{{\bfseries #1}}\thmnumber{ {\bfseries #2}}\thmnote{ (#3)}}  
\theoremstyle{mytheorem}
\newtheorem{theorem}{Theorem}[section]
\newtheorem{proposition}[theorem]{Proposition}
\newtheorem{corollary}[theorem]{Corollary}
\newtheorem{remark}[theorem]{Remark}
\newtheorem{definition}[theorem]{Definition}
\newtheorem{fact}[theorem]{Fact}
\newcommand{\bbE}{{\ensuremath{\mathbb E}} }
\newcommand{\bbL}{{\ensuremath{\mathbb L}} }
\newcommand{\bbN}{{\ensuremath{\mathbb N}} }
\newcommand{\bbP}{{\ensuremath{\mathbb P}} }
\newcommand{\bbR}{{\ensuremath{\mathbb R}} }
\newcommand{\bbV}{{\ensuremath{\mathbb V}} }
\newcommand{\bbZ}{{\ensuremath{\mathbb Z}} }
\newcommand{\cG}{{\ensuremath{\mathcal G}} }
\newcommand{\cH}{{\ensuremath{\mathcal H}} }
\newcommand{\cW}{{\ensuremath{\mathcal W}} }
\newcommand{\cZ}{{\ensuremath{\mathcal Z}} }
\renewcommand{\tilde}{\widetilde}          
\DeclareMathSymbol{\leqslant}{\mathalpha}{AMSa}{"36} 
\DeclareMathSymbol{\geqslant}{\mathalpha}{AMSa}{"3E} 
\DeclareMathSymbol{\eset}{\mathalpha}{AMSb}{"3F}     
\newcommand{\R}{\mathbb{R}}
\newcommand{\Z}{\mathbb{Z}}
\newcommand{\N}{\mathbb{N}}
\renewcommand{\epsilon}{\varepsilon}
\renewcommand{\theta}{\vartheta}
\renewcommand{\rho}{\varrho}
\newenvironment{myenumerate}{%
\renewcommand{\theenumi}{\arabic{enumi}}%
\renewcommand{\labelenumi}{{\rm(\theenumi)}}%
\begin{list}{\labelenumi}
	{%
	\setlength{\itemsep}{0.4em}%
	\setlength{\topsep}{0.5em}%
	\setlength\leftmargin{2.45em}%
	\setlength\labelwidth{2.05em}%
	\setlength{\labelsep}{0.4em}%
	\usecounter{enumi}%
	}%
	}%
{\end{list}
}
\newenvironment{myitemize}{%
\begin{list}{$\bullet$}%
 	{%
	\setlength{\itemsep}{0.4em}%
	\setlength{\topsep}{0.5em}%
	\setlength\leftmargin{2.45em}%
	\setlength\labelwidth{2.05em}%
	\setlength{\labelsep}{0.4em}%
	}%
	}%
{\end{list}}
\renewenvironment{itemize}{
\begin{myitemize}}%
{\end{myitemize}}
\newcommand\independent{\protect\mathpalette{\protect\independenT}{\perp}}
\def\independenT#1#2{\mathrel{\rlap{$#1#2$}\mkern2mu{#1#2}}}
\newcommand{\leqF}{ \leq_{\mathcal{F}}}
\newcommand{\leqLt}{\leq_{\textrm{Lt}}}
\begin{document}
	\begin{abstract} In this note we establish several inequalities and monotonicity properties for the free energy of directed polymers under certain stochastic orders: the usual stochastic order, the Laplace transform order and the convex order. For the latter our results cover also many classical disordered systems.
	\end{abstract}
\title{A  note about Domination and monotonicity in  disordered systems.}
\author[V.L. Nguyen]{Vu-Lan Nguyen }

\maketitle

\section[Introduction]{Introduction}

 In the context of disordered systems, many models are analytically intractable, one could  compare them with more simple models to deduce certain qualitative properties. In this note, we will first focus on the directed polymer model to clarify this idea.
 The directed polymer model was introduced in the statistical physics literature by Huse and Henley \cite{HH85} to study the domain walls of Ising models with impurities and have been applied to many others problems. In recent years they have received much interest because of their strong relationship with the Kardar-Parisi-Zhang (KPZ) equation and  the universality class that it determines (see the review \cite{C12}).

Let $P$ the law of the simple random walk on $\bbZ^d$ with corresponding expectation $E$. Let $\{\omega(i,x): i\in \bbN,\ x\in \bbZ^d \}$ be a collection of real numbers (the environment) and define
\begin{equation*}
Z_N(\omega,\beta) = E\big[e^{\beta\sum_{i=1}^{N}\omega(i,x_i)}\big],
\end{equation*}
 the point-to-line partition function of the directed polymers in environment $\omega$ at inverse temperature $\beta >0$. In this note, in order to be clear, we will denote it also by $Z_n^{pol}$ in some statements (in Section \ref{sec:mtree}). The quantity of interest is the free energy of the system, which is defined as 
\begin{equation*}
p(\beta) = \lim_{N\to\infty} \frac{\log Z_N(\omega,\beta)}{N}.
\end{equation*}
In order to estimate $p(\beta)$, Cook and Derrida  \cite{CD90} introduced the m-tree model as an approximation of the directed polymer. The later model (see Definition \ref{def:mtree}) can be viewed as a directed polymer on a tree  and  can be solved exactly by different alternative approaches, for example by using an analogy with travelling waves or replica approach \cite{CD90},\cite{DS88}. 
This approximation was introduced in mathematics literature  later by Comets and Vargas \cite{CV06} to establish that the free energy $p^{m-tree}(\beta)$ of the m-tree model  is an upper bound for the free energy of directed polymer 
\begin{equation}\label{re:upper_bound}
p(\beta)\leq \inf_{m\geq 1}\frac{1}{m} p^{m-tree}(\beta).
\end{equation}
 The statement (\ref{re:upper_bound}) is quite interesting because it is not trivial that the directed polymer can be obtain as a limit of other models with simpler geometric structure. One natural question is: Can we extend the comparison between these models, for example compare their partition functions, their fluctuations, $\cdots$ under certain stochastic orders ?


{It is well established that stochastic order relations constitute an important tool in the analysis of random variables, mainly applications in various actuarial problems \cite{De01} or comparison of queues \cite{BB03}. Their basic goal  is to compare two distribution functions, not only by comparing the means or the dispersion of these distributions but also expectation of a whole class of functions. } 
%
In general, given two random variables $X$ and $Y$, the binary relation $\leqF$ is defined as follows:
\begin{equation}\label{def:sto_order}
X\leqF Y \qquad \textrm{if} \qquad \mathbb{E}\big[\Phi(X)\big] \leq \mathbb{E}\big[\Phi(Y)\big],
\end{equation}
for all $\Phi\in \mathcal{F}$ such that the integrals are well defined.
The relation $\leqF$ is reflexive and transitive, so it always defines a partial ordering on the set of random variables. Three basic examples of sets $\mathcal{F}$ are:
\begin{itemize}
	\item The set $\{st\}=\{\Phi:\R^n\to\R, \ \Phi\  \textrm{non-decreasing} \}$ generates the increasing  integral order.
	\item The set $\{cx\}=\{\Phi:\R^n\to\R, \ \Phi \ \textrm{convex} \}$ generates the convex integral order.
	\item The set  $\{Lt\}=\{\Phi:\R^+\to\R, \ \Phi(x) = -\exp(-\lambda x) \ \textrm{for all} \ \lambda > 0 \}$ generates the Laplace transform order.
	
\end{itemize}

 The  first main result of this note is that the directed polymer model is  dominated by the m-tree model in the following sense 
\begin{theorem}\label{thm:main}
	Fix $m\geq 1$, and suppose that the random variables $\omega$ have some exponential moments. Then we have the following inequality
	\begin{equation*}
	Z_{n}^{\textrm{pol}}\leqLt Z_{n}^{\textrm{m-tree}}.
	\end{equation*}
	Here $Z_{n}^{\textrm{m-tree}}$ denoted the point-to-line partition function of m-tree model  and $\leqLt$ is the Laplace transform order.
\end{theorem}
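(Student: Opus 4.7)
My plan is to rewrite the claimed inequality $Z_n^{\mathrm{pol}} \leqLt Z_n^{\mathrm{m-tree}}$ explicitly as
\begin{equation*}
\mathbb{E}\bigl[e^{-\lambda Z_n^{\mathrm{pol}}}\bigr] \;\geq\; \mathbb{E}\bigl[e^{-\lambda Z_n^{\mathrm{m-tree}}}\bigr] \qquad \text{for every } \lambda > 0,
\end{equation*}
and to deduce it by passing from $Z_n^{\mathrm{pol}}$ to $Z_n^{\mathrm{m-tree}}$ through a finite sequence of elementary ``splits'' of individual environment variables, each of which can only decrease $\mathbb{E}[e^{-\lambda Z}]$. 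The structural observation is that both partition functions are path averages $E_\pi\bigl[\prod_{i=1}^N e^{\beta \tilde\omega(\pi,i)}\bigr]$, but in the polymer $\tilde\omega(\pi,i) = \omega(i,\pi_i)$, while in the m-tree $\tilde\omega(\pi,i) = \omega^{(\pi_1,\ldots,\pi_{km})}(i,\pi_i)$ with $k = \lfloor (i-1)/m\rfloor$, and copies with distinct pre-block prefixes are independent. Thus the m-tree is literally the polymer in which every variable $\omega(i,x)$ has been split into a family of i.i.d.\ copies indexed by the realizable pre-block prefixes; only finitely many such copies are relevant for fixed $N$.

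The heart of the argument is a one-variable splitting lemma. Take any shared variable $\omega_{*}$ used by a set of paths $I$, partition $I$ into clusters $I_1, \ldots, I_r$, and let $Z'$ be the partition function in which $\omega_{*}$ on cluster $I_j$ is replaced by an independent copy $\omega_{*}^{(j)}$. Writing $Z = B + A\,e^{\beta\omega_{*}}$ and $Z' = B + \sum_j A_j e^{\beta\omega_{*}^{(j)}}$ with $A = \sum_j A_j$ and $B, A_j$ measurable with respect to the remaining environment, integration over $\omega_{*}$ and the $\omega_{*}^{(j)}$'s gives
\begin{equation*}
\mathbb{E}\bigl[e^{-\lambda Z} \bigm| \mathrm{rest}\bigr] = e^{-\lambda B} g(\lambda A), \qquad \mathbb{E}\bigl[e^{-\lambda Z'} \bigm| \mathrm{rest}\bigr] = e^{-\lambda B} \prod_{j=1}^{r} g(\lambda A_j),
\end{equation*}
where $g(t) := \mathbb{E}[\exp(-t\,e^{\beta\omega})]$ is the Laplace transform of the non-negative random variable $e^{\beta\omega}$. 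Since Laplace transforms of non-negative random variables are log-convex (Cauchy--Schwarz yields $g''g \geq (g')^2$) and $g(0) = 1$, the function $\log g$ is convex on $[0,\infty)$ vanishing at the origin, hence superadditive; this gives $g(\lambda A) = g\bigl(\sum_j \lambda A_j\bigr) \geq \prod_j g(\lambda A_j)$, and averaging over the remaining environment produces $\mathbb{E}[e^{-\lambda Z}] \geq \mathbb{E}[e^{-\lambda Z'}]$.

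I would then iterate this lemma across the finitely many space-time sites $(i,x)$ actually visited by length-$N$ paths, splitting each $\omega(i,x)$ into the i.i.d.\ copies indexed by the pre-block prefixes prescribed by the m-tree. Telescoping the elementary inequalities delivers the desired $\mathbb{E}[e^{-\lambda Z_n^{\mathrm{pol}}}] \geq \mathbb{E}[e^{-\lambda Z_n^{\mathrm{m-tree}}}]$.

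The main obstacle is really this one-variable lemma: once the log-convexity of $g$ is in hand, the iteration is essentially bookkeeping. The exponential-moment assumption on $\omega$ enters only to ensure that every partition function appearing during the iteration is a.s.\ finite, so that the conditioning and splitting steps are all unambiguous.
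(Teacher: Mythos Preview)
Your argument is correct and takes a genuinely different route from the paper's. The paper works at the level of $m$-blocks: it writes $Z_{km}^{\mathrm{pol}}=\sum_{x\in L_{(k-1)m}} Z_{0,(k-1)m}(x)\,Z_{(k-1)m,km}^{x}$, conditions on the environment up to time $(k-1)m$, and observes that the family $(Z_{(k-1)m,km}^{x})_{x}$ is \emph{positively associated} (being coordinatewise increasing functions of independent $\omega$'s). The association inequality $\mathbb{E}\bigl[\prod_x f_x(X_x)\bigr]\geq \prod_x \mathbb{E}[f_x(X_x)]$ for non-negative non-increasing $f_x$ then lets one replace the whole block $(Z_{(k-1)m,km}^{x})_x$ by an independent copy in a single step; iterating over $k$ yields the $m$-tree.

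You instead decouple one environment variable at a time. Your key lemma---that $g(t)=\mathbb{E}[\exp(-t\,e^{\beta\omega})]$ is log-convex with $g(0)=1$, hence $g\bigl(\sum_j a_j\bigr)\geq \prod_j g(a_j)$---is precisely the one-variable instance of the association inequality the paper uses (a single random variable is trivially associated with itself), but you obtain it directly from Cauchy--Schwarz without invoking the general machinery of associated vectors. The trade-off is that your iteration has many more steps (one per space-time site rather than one per $m$-block) and requires checking that the successive splits, indexed by pre-block prefixes $(\pi_m,\ldots,\pi_{(k-1)m})$, really reconstruct the $m$-tree environment; this bookkeeping is straightforward but should be spelled out. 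In exchange, your analytic core is more self-contained and arguably more transparent about \emph{why} the inequality goes in the stated direction.
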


As a consequence,  certain estimations, which follow from general considerations for Gaussian environment, can be proved in  general media, such as
\begin{equation*}
\bbE[\log Z_{n}^{\textrm{pol}}]\leq E[\log Z_{n}^{\textrm{m-tree}}] \qquad \textrm{for all} \ n\geq 1.
\end{equation*}

{Moreover, we can deduce an upper bound  (respect to the Laplace transform order)  for the solution of stochastic heat equation (SHE) via the m-tree model.}

The second application of stochastic order for the polymer model is to understand the variability of the free energy as a function of $\beta$. It is folklore that fluctuation of thermodynamic quantities are increasing in the inverse temperature but mathematical formulations of this fact are rare. Let us define the normalized partition function as
\begin{equation*}
W_n(\beta)=E\Big[e^{\beta H_n(x)-n\lambda(\beta)}\Big],
\end{equation*}
where $\lambda(\beta)= \log \bbE(e^{\beta \omega(i,x)} ) <\infty$. Then we have the following  monotonicity property
\begin{theorem}\label{thm:second}
	Fixed $n \in \N$, the process $(W_n(\beta), \beta \in \R_+)$ is increasing in the convex order.
\end{theorem}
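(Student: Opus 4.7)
Since $\bbE[W_n(\beta)]=1$ for every $\beta\geq 0$, the convex order relation $W_n(\beta_1)\leq_{\mathrm{cx}}W_n(\beta_2)$ for $\beta_1\leq\beta_2$ is equivalent, by the standard reduction to call payoffs $\Phi(y)=(y-t)_+$, to showing that
$$g_t(\beta):=\bbE\bigl[(W_n(\beta)-t)_+\bigr]$$
is non-decreasing in $\beta$ for every $t\geq 0$. My plan is therefore to differentiate $g_t$ in $\beta$ and check that $g_t'(\beta)\geq 0$.

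Using dominated convergence (easily justified on a neighbourhood of any $\beta>0$ under the hypothesis $\lambda(\beta)<\infty$) to pass $d/d\beta$ inside $\bbE$, and Fubini to exchange $\bbE$ with the polymer path expectation $E$, one arrives at
\begin{equation*}
g_t'(\beta)=E\!\left[\bbE\!\left[\bigl(H_n(x)-n\lambda'(\beta)\bigr)\,e^{\beta H_n(x)-n\lambda(\beta)}\,\ind_{\{W_n(\beta)>t\}}\right]\right],
\end{equation*}
with $H_n(x):=\sum_{i=1}^{n}\omega(i,x_i)$. For each fixed trajectory $x$ I would then perform the Esscher tilt $d\widetilde{\bbP}_{x,\beta}:=e^{\beta H_n(x)-n\lambda(\beta)}\,d\bbP$. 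Because the $n$ variables $\{\omega(i,x_i)\}_{i=1}^{n}$ are i.i.d.\ and independent from all remaining coordinates of $\omega$, this tilt only reweights the on-path variables (each acquiring mean $\lambda'(\beta)$) and preserves the product structure of $\bbP$; in particular $\widetilde{\bbE}_{x,\beta}[H_n(x)-n\lambda'(\beta)]=0$.

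The decisive step is then an application of the Harris/FKG correlation inequality to the product measure $\widetilde{\bbP}_{x,\beta}$. The function $\omega\mapsto H_n(x)-n\lambda'(\beta)$ is coordinate-wise non-decreasing (increasing in the on-path coordinates, constant elsewhere), and $\omega\mapsto\ind_{\{W_n(\beta)>t\}}$ is coordinate-wise non-decreasing because, for $\beta\geq 0$, the partition function $W_n(\beta)=\sum_{y} p(y)\,e^{\beta H_n(y)-n\lambda(\beta)}$ is visibly increasing in every $\omega(j,z)$. Harris's inequality therefore yields
\begin{equation*}
\widetilde{\bbE}_{x,\beta}\!\left[\bigl(H_n(x)-n\lambda'(\beta)\bigr)\,\ind_{\{W_n(\beta)>t\}}\right]\;\geq\;\widetilde{\bbE}_{x,\beta}[H_n(x)-n\lambda'(\beta)]\cdot\widetilde{\bbE}_{x,\beta}[\ind_{\{W_n(\beta)>t\}}]\;=\;0,
\end{equation*}
and integrating over the random walk path $x$ gives $g_t'(\beta)\geq 0$, completing the argument.

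The only genuinely delicate point is the routine interchange of $d/d\beta$ and $\bbE$ under the exponential-moment hypothesis; the conceptual heart of the proof is the observation that tilting by $e^{\beta H_n(x)-n\lambda(\beta)}$ along a single path is a product reweighting, after which $W_n(\beta)$ is recognised as a coordinate-wise monotone functional of $\omega$ and Harris/FKG does all the remaining work.
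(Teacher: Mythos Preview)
Your proof is correct and follows essentially the same route as the paper: differentiate in $\beta$, tilt by the single-path Gibbs factor $e^{\beta H_n(x)-n\lambda(\beta)}$ to obtain a product measure with $\widetilde{\bbE}[H_n(x)-n\lambda'(\beta)]=0$, and then exploit positive association (the paper says ``associated random variables'', you say ``Harris/FKG'') of the two coordinate-wise increasing functionals $H_n(x)$ and $\Phi'(W_n(\beta))$. The only cosmetic difference is that you first reduce to the generating family $\Phi(y)=(y-t)_+$ so that $\Phi'$ becomes an indicator, whereas the paper keeps a generic smooth convex $\Phi$ throughout; neither choice changes the argument.
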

Such type of processes are called Peacocks. They  were introduced and intensively studied by {B. Roynette, C. Profeta, F. Hirsch,  and M. Yor \cite{HCY11}}. Actually we will prove that such property is also verified in many disordered systems such as  Sherrington-Kirkpatrick (SK) model, Edwards-Anderson (EA) model, Random field Ising (RFIM) model. 

The note is organised as follows: In Section \ref{sec:pre} we  recall some basic notations of stochastic orders such as the convex order, the Laplace transform order and associated random variables. In Section \ref{sec:mtree}, we  revisit the m-tree model introduced by Cook, Derrida and prove Theorem \ref{thm:main} and its corollaries. In Section \ref{sec:pecock}, we discuss the relationship between the partition functions and Peacocks, and the constructions of   associated martingales. 

\section{Stochastic orders and Correlations}\label{sec:pre}
\subsection{Stochastic orders}
In this section we recall some basic properties of the convex order and the Laplace transform order.
\subsubsection{Usual stochastic order}
Let $X$ and $Y$ be two random variables such that
\begin{equation*}
\bbP[X\leq x] \leq \bbP [Y\leq x] \ \textrm{for all}\ x\in (-\infty,\infty).
\end{equation*}
Then $X$ is said to be smaller than $Y$ in the usual stochastic order, and we denote it by $X\leq_{st} Y$. It is direct to check that this definition is equivalence to the one in Introduction, i.e. by (\ref{def:sto_order}) with $\mathcal{F}=\{st\}$. An other important characterization of the usual stochastic order is its pointwise presentation :
\begin{equation*}
X\leq_{st} Y \iff \exists \ \textrm{ a coupling } \ (\tilde{X}, \tilde{Y}) \ \textrm{of} \ X \ \textrm{and}\ Y \ \textrm{such that} \ \tilde{X} \leq \tilde{Y} \ a.s.
\end{equation*}
This order is useful to compare 2 random variables with different means. Indeed, if $X\leq_{st} Y$ and $\bbE[X]=\bbE[Y]$, then $X=Y$.
\subsubsection{Convex order and peacock}
Let $X$ and $Y$ be two real-valued r.v.'s X is said to be dominated by $Y$ for convex order if, for every convex function $\Phi:\R\to\R$ such that $\bbE[\vert\Phi(X)\vert]<\infty$ and $\bbE[\vert\Phi(Y)\vert]<\infty$, one has:
\begin{equation}
\bbE[\Phi(X)]\leq\bbE[\Phi(Y)].
\end{equation}
This definition is equivalent to say that for every $d\in \R$, we have \cite{SS07}:
\begin{equation*}
\bbE[(d-X)_+]\leq \bbE[(d-Y)_+].
\end{equation*}
The convex order is introduced to compare  the dispersion of random variables  with the same expectation:
\begin{fact}[\cite{SS07}]
	Suppose that $X\leq_{cx} Y$  with $X,Y \in \bbL^1$ then
	\begin{align*}
	\bbE[X]&=\bbE[Y],\\
	\bbV ar(X)&\leq \bbV ar(Y)\\
	\bbE[\vert X-a\vert^p]&\leq\bbE[\vert Y-a\vert^p] \quad \textrm{for all} \ p\geq 1.
	\end{align*}
	
\end{fact}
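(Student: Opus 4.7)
The plan is to verify each of the three assertions by applying the defining inequality of $\leq_{cx}$ to a suitably chosen convex test function, and to handle any integrability subtlety by means of the martingale-coupling characterisation of the convex order (Strassen's theorem).

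First I would handle the equality of expectations. Both $\Phi(x)=x$ and $\Phi(x)=-x$ are (trivially) convex, and the hypothesis $X,Y\in\bbL^1$ guarantees finiteness of all the relevant integrals. Applying the definition of $\leq_{cx}$ with each of them yields $\bbE[X]\leq\bbE[Y]$ and $\bbE[-X]\leq\bbE[-Y]$ respectively, so $\bbE[X]=\bbE[Y]$.

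For the variance inequality I would apply the definition with the convex function $\Phi(x)=x^2$. If $\bbE[Y^2]=\infty$ the statement is vacuous since then $\bbV ar(Y)=\infty$; otherwise the definition yields $\bbE[X^2]\leq\bbE[Y^2]$, and combined with the first assertion this rearranges to $\bbV ar(X)=\bbE[X^2]-\bbE[X]^2\leq\bbE[Y^2]-\bbE[Y]^2=\bbV ar(Y)$. For the third assertion, I would use $\Phi(x)=|x-a|^p$, which is convex for $p\geq 1$ as the composition of the non-decreasing convex function $t\mapsto t^p$ on $[0,\infty)$ with the convex function $x\mapsto|x-a|$; applying the defining inequality to this $\Phi$ is precisely the claim (with the same dichotomy as above if the right-hand side is infinite).

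The only subtle point is the integrability condition built into the definition of $\leq_{cx}$: one must know that $\bbE[|\Phi(X)|]$ and $\bbE[|\Phi(Y)|]$ are both finite before invoking the inequality. For the affine test functions used in part one this is automatic from $X,Y\in\bbL^1$, but for $\Phi(x)=x^2$ or $\Phi(x)=|x-a|^p$ the finiteness of $\bbE[\Phi(X)]$ is not a priori clear from that of $\bbE[\Phi(Y)]$. The cleanest remedy, which I expect to be the main (and only) real work beyond the routine choice of test functions, is to appeal to Strassen's theorem: $X\leq_{cx}Y$ is equivalent to the existence of a martingale coupling $(\tilde X,\tilde Y)$ on a common probability space with $\tilde X=\bbE[\tilde Y\mid\mathcal G]$ for some sub-$\sigma$-algebra $\mathcal G$. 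Jensen's inequality conditional on $\mathcal G$ then gives $\Phi(\tilde X)\leq\bbE[\Phi(\tilde Y)\mid\mathcal G]$ almost surely, and taking expectations yields $\bbE[\Phi(X)]\leq\bbE[\Phi(Y)]$ for every convex $\Phi$, with no separate integrability check needed. This bypasses the need to truncate $\Phi$ and makes parts two and three unconditional in the form stated.
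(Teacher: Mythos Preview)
Your proposal is correct and follows the same idea as the paper: apply the definition of $\leq_{cx}$ to the convex test functions $x$, $-x$, $x^2$, and $|x-a|^p$. The paper's own proof is a single sentence naming only $x$, $-x$, and $x^2$; it neither treats the third assertion explicitly nor addresses the integrability caveat you raise, so your argument is strictly more complete, and your recourse to the Kellerer/Strassen martingale coupling (stated immediately after this Fact in the paper) to bypass the finiteness check is a sound refinement that the paper simply omits.
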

The latter statement shows that convex order is a fine way to compare the dispersion between two random variables. 
\begin{proof}
	The proof comes directly from the fact that the functions $x \mapsto x $, $x \mapsto -x$ and $x\mapsto x^2$ are convex.
\end{proof}
One of the most interesting features of the convex order is  again here its construction on the same probability space:
\begin{fact}[\cite{Ke72}]\label{Fact:Kellerer}
	Let $X$ and $Y$ be two integrable random variables; $X\leq_{cx} Y$ if and only if there exist two $\R^n$-valued random variables $X_1$ and $Y_1$ defined on the same probability space with  the same distributions as  $X$ and $Y$ respectively, and such that $E[Y_1\vert X_1]=X_1$ a.s.
\end{fact}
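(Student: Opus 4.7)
The plan is to prove the two implications separately; the reverse direction is a routine application of Jensen's inequality, while the forward direction is Strassen's theorem and is substantially harder.

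For the $(\Leftarrow)$ direction, suppose a coupling $(X_1,Y_1)$ with $\bbE[Y_1\mid X_1]=X_1$ a.s.\ is given. For any convex $\Phi$ satisfying the stated integrability, the conditional Jensen inequality yields
\begin{equation*}
\bbE[\Phi(Y_1)] = \bbE\bigl[\bbE[\Phi(Y_1)\mid X_1]\bigr] \geq \bbE\bigl[\Phi(\bbE[Y_1\mid X_1])\bigr] = \bbE[\Phi(X_1)],
\end{equation*}
and since $\leq_{cx}$ depends only on marginal distributions this gives $X\leq_{cx} Y$.

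For the $(\Rightarrow)$ direction I would follow Strassen's classical scheme in four steps. \textbf{Step 1:} Handle first the case where $\mu_X$ and $\mu_Y$ have finite support. Existence of a martingale coupling with these marginals is then a finite-dimensional linear feasibility problem. I would argue by Hahn--Banach separation: if no such coupling existed, the set of probability measures on $\R^2$ with the correct marginals and the set of martingale measures could be strictly separated by a linear functional of the form $(x,y)\mapsto a(x)+b(y)+c(x)(y-x)$, which after optimisation over $c$ produces a convex function contradicting $\bbE[\Phi(X)]\leq\bbE[\Phi(Y)]$. \textbf{Step 2:} Approximate $\mu_X$ and $\mu_Y$ by finitely supported laws $\mu_X^{(n)}$, $\mu_Y^{(n)}$ converging weakly to $\mu_X$, $\mu_Y$ while preserving the convex order. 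A convenient choice is to take $\mu_X^{(n)}$, $\mu_Y^{(n)}$ as the laws of $\bbE[X\mid\mathcal{D}_n]$, $\bbE[Y\mid\mathcal{D}_n]$ for a refining sequence $\mathcal{D}_n$ of finite partitions of $\R$; conditional expectations are themselves martingale couplings, so $\bbE[X\mid\mathcal{D}_n]\leq_{cx} X\leq_{cx} Y$ and similarly on the $Y$ side, and the two convex orders chain appropriately. \textbf{Step 3:} Apply Step 1 to obtain martingale couplings $\pi_n$ of $\mu_X^{(n)}$ with $\mu_Y^{(n)}$; the marginals are tight, hence $\{\pi_n\}$ is tight on $\R^2$, and one extracts a weakly convergent subsequence $\pi_n\to\pi$ with marginals $\mu_X$, $\mu_Y$.

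The main obstacle is \textbf{Step 4}, verifying that the limit $\pi$ is still a martingale coupling. Weak convergence alone does not preserve the identity $\int y\,\pi_n(x,dy)=x$ because the test function $y\mapsto y$ is unbounded. I would close this gap by first establishing uniform integrability of the second marginals, which follows from $Y\in L^1$ together with the Jensen inequality bound $\bbE[|Y^{(n)}|]\leq \bbE[|Y|]$. Then, testing the martingale property against bounded continuous functions of $x$ and using a truncation $y\mapsto y\wedge K\vee(-K)$ together with uniform integrability, one passes to the limit and recovers $\bbE^{\pi}[Y\mid X]=X$. This promotion of a weak limit to a genuine martingale coupling is the delicate step; the remainder of the argument is standard.
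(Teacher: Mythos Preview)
The paper does not supply its own proof of this statement; it is quoted as a known fact with a reference to Kellerer \cite{Ke72}. So there is no ``paper's proof'' to compare against, and your proposal must be judged on its own terms.

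Your $(\Leftarrow)$ direction is correct and standard. The overall architecture for $(\Rightarrow)$ --- Strassen's scheme of finite approximation plus a tightness/limit argument --- is also the right one. However, Step~2 as written contains a genuine gap. You propose to discretize both marginals simultaneously by taking $\mu_X^{(n)}$ and $\mu_Y^{(n)}$ to be the laws of $\bbE[X\mid\mathcal{D}_n]$ and $\bbE[Y\mid\mathcal{D}_n]$ for a common partition $\mathcal{D}_n$ of $\R$, and then assert that ``the two convex orders chain appropriately''. They do not: the chain $\bbE[X\mid\mathcal{D}_n]\leq_{cx}X\leq_{cx}Y$ and $\bbE[Y\mid\mathcal{D}_n]\leq_{cx}Y$ gives no relation between $\bbE[X\mid\mathcal{D}_n]$ and $\bbE[Y\mid\mathcal{D}_n]$, and in fact the desired relation can fail. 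For a concrete counterexample, let $X$ be uniform on $\{-1,1\}$ and let $Y$ take the values $-10,0,10$ with probabilities $0.05,0.9,0.05$; one checks directly that $X\leq_{cx}Y$. Conditioning on the partition $\{(-\infty,0],(0,\infty)\}$ leaves $X$ unchanged but sends $Y$ to the law that puts mass $0.95$ at $-10/19$ and mass $0.05$ at $10$. Evaluating the potential function $d\mapsto\bbE|Z-d|$ at $d=-10/19$ gives $1$ for the discretized $X$ but only $10/19$ for the discretized $Y$, so the convex order is destroyed.

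The fix is not to discretize both sides with the same crude operator. One workable route is to discretize only $\mu_X$ (so that $\mu_X^{(n)}\leq_{cx}\mu_X\leq_{cx}\mu_Y$ genuinely chains) and show directly that a finitely supported measure dominated in convex order by an arbitrary $\mu_Y$ admits a martingale coupling --- this is a finite barycentric decomposition problem. Alternatively, one can approximate $\mu_Y$ from \emph{above} in convex order by finitely supported measures and then correct the marginal in the limit. Either way, the ``chain appropriately'' step needs to be redone; as it stands, Step~2 does not produce a pair $\mu_X^{(n)}\leq_{cx}\mu_Y^{(n)}$ on which Step~1 can act.
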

By consequence, given a martingale $(X_t,t\geq 0)$ , the previous proposition shows that for every $s<t$, we have:
$$X_s\leq_{cx}X_t.$$
In other words, the one-dimensional marginals of a martingale is an increasing process for the convex order. 
A process $\big(X_t,\ t\geq 0\big)$ is said to be integrable if, for every $t\geq 0$, $\bbE[\vert X_t \vert]<\infty$.
\begin{definition}
	An integrable process which is increasing in the convex order is called a peacock.
\end{definition}
We call a process $\big(X_t,\ t\geq 0\big)$ a $1$-martingale if there exists a martingale $\big(M_t,\ t\geq 0\big)$ such that, for every fixed $t\geq0$:
$$X_t\stackrel{law}{=}M_t.$$
Then Fact \ref{Fact:Kellerer} can be extended to an identity between  peacocks and  $1$-martingales :
\begin{theorem}\cite{Ke72}
	The two following properties are equivalent:
	
	\begin{myenumerate}
		\item $(X_t,\ t \geq 0)$ is a peacock.
		\item $(X_t,\ t \geq 0)$ is a $1$-martingale. 
	\end{myenumerate}

\end{theorem}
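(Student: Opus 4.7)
The implication $(2) \Rightarrow (1)$ is the easy direction. Given a martingale $(M_t)$ with $M_t \stackrel{law}{=} X_t$, I would apply Jensen's inequality to any convex $\Phi$ in the admissible class: $\Phi(M_s) = \Phi(\bbE[M_t \mid \mathcal{F}_s]) \leq \bbE[\Phi(M_t) \mid \mathcal{F}_s]$ for $s \leq t$. Taking expectations and using $X_t \stackrel{law}{=} M_t$ yields $\bbE[\Phi(X_s)] \leq \bbE[\Phi(X_t)]$, which is exactly $X_s \leq_{cx} X_t$.

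The converse $(1) \Rightarrow (2)$ is the substantive content of Kellerer's theorem, and my plan is to construct a \emph{Markov} martingale $(M_t, t \geq 0)$ with $M_t \stackrel{law}{=} X_t$ for every $t$. Write $\mu_r := \mathrm{law}(X_r)$. The starting point is Fact~\ref{Fact:Kellerer}: for every pair $s < t$ the peacock hypothesis supplies a Markov kernel $K_{s,t}(x, dy)$ with $K_{s,t} \mu_s = \mu_t$ and the martingale barycenter condition $\int y\, K_{s,t}(x, dy) = x$. The obstruction is that these pairwise kernels, taken from arbitrary Strassen-type couplings, need not satisfy the Chapman--Kolmogorov identity $K_{s,u} = K_{s,t} \circ K_{t,u}$ for $s < t < u$, so they cannot be glued naively into a bona fide process.

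Kellerer's idea is to select the kernels $K_{s,t}$ from a canonical class of martingale transport plans --- for instance the left-monotone (Hobson--Neuberger) coupling --- whose dependence on $(\mu_s, \mu_t)$ is continuous in the weak topology. I would first build a Markov chain on a countable dense set $D \subset \R_+$ by iterating these canonical kernels, verify consistency on $D$ by exploiting the said continuity together with the weak continuity of $t \mapsto \mu_t$ induced by the peacock property, and then invoke Kolmogorov's extension theorem to obtain a process $(M_t, t \in D)$ with the prescribed marginals; the tower property ensures it is a martingale. The last step is to pass from $D$ to $\R_+$ via Doob's regularization, producing a cadlag modification, and to check (using uniform integrability coming from the constancy of $t \mapsto \bbE|X_t|$ along the convex-order skeleton) that the marginals still match $\mu_t$ at every $t \in \R_+$.

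The main obstacle is precisely this canonical selection of kernels and the verification of Chapman--Kolmogorov: the existence of pairwise couplings from Fact~\ref{Fact:Kellerer} is by itself insufficient, and the heart of Kellerer's argument is a compactness/continuity analysis that makes the canonical choice depend measurably and consistently on the marginals. Once consistency is secured, Kolmogorov extension and Doob regularization are standard.
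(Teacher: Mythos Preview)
The paper does not actually prove this theorem: it is stated with the citation \cite{Ke72} and immediately followed by commentary, with no proof environment. So there is no ``paper's own proof'' to compare against; the result is simply quoted as background.

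That said, a few remarks on your sketch. The easy direction $(2)\Rightarrow(1)$ via Jensen is correct and standard. For $(1)\Rightarrow(2)$ your outline captures the broad shape of Kellerer's strategy (pairwise martingale couplings from Strassen, then a gluing/compactness step), but several details are off. First, invoking the ``left-monotone (Hobson--Neuberger)'' coupling is anachronistic: that construction postdates Kellerer by decades; Kellerer's 1972 argument proceeds instead through \emph{Lipschitz} Markov kernels and a finite-dimensional approximation plus compactness, not via a canonical optimal-transport coupling. Second, your uniform-integrability remark appeals to ``constancy of $t\mapsto\bbE|X_t|$ along the convex-order skeleton'', which is false: for a peacock $\bbE[X_t]$ is constant but $\bbE|X_t|$ is nondecreasing, since $|x|$ is convex. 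Third, the peacock property does not by itself give weak continuity of $t\mapsto\mu_t$; the map can jump, and Kellerer handles this by working first on finite time sets and passing to the limit via tightness of the laws of the approximating Markov martingales, not by assuming continuity of the marginal flow. These are genuine gaps in the plan rather than cosmetic issues, though the overall architecture you describe is in the right spirit.
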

This theorem shows that a peacock can be associated to a martingale which shares the same one-dimensional marginals with it. But normally  it is  a non-trivial question to find such embedding which would be natural in the context. In Section 4, the martingale representations of partition functions will be provided in certain special cases. One can check the book  \cite{HCY11} to get more information about various methods to construct the associated martingales. 

\subsubsection{Laplace transform order}

Given two  random variables $X$ and $Y$, $X$ is said to be smaller than $Y$ in the Laplace transform order, denoted as $X\leqLt Y $, when the inequality
\begin{equation}\label{def:Lt}
\bbE[-\exp(-\lambda X)]\leq \bbE[-\exp(-\lambda Y)] \quad \textrm{holds for all} \ \lambda\geq 0.
\end{equation}
By rewriting the definition \ref{def:Lt}, on can easily deduce that
\begin{equation*}
X\leqLt Y\iff \frac{X}{\epsilon}\leq_{st} \frac{Y}{\epsilon'},
\end{equation*}
where $\epsilon\independent X, \ \epsilon\sim Exp(1)$ , $\epsilon'\independent Y, \ \epsilon'\sim Exp(1)$.
Let $U_{cm}$ be the class of the completely monotone functions, that is the class of the functions $\Phi:(0,+\infty)\to \R^+$ satisfying
$$(-1)^k\Phi^{(k)}\geq 0 \quad  \textrm{for all} \quad  k\geq 0,$$
where $\Phi^{(k)}$ denotes the $k$th derivative of $\Phi$. An example of such function is $x\mapsto e^{-\lambda x}$. It is well known that for $\Phi\in U_{cm}$, there exists a positive measure $\mu$ on $\R^+$, not necessarily finite, such that
\begin{equation*}
\Phi(x)=\int_{0}^{+\infty} \exp(-tx)d\mu(t), \quad x\in \R^+,
\end{equation*} 
see for example  Theorem 1a, page 416 \cite{Fe66}. Then the Laplace transform order and completely monotone functions are related as follows \cite{Lin98}
\begin{proposition}\label{theo:Lin}:
	Let $X$ and $Y$ be two positive random variables; then $X\leqLt Y$ is equivalent to each one of the following statements:
	\begin{myenumerate}
		\item $\bbE f(X)\geq \bbE f(Y)$ for each c.m function $f:(0,+\infty)\to [0,+\infty)$.
		\item $\bbE f(X)\leq \bbE f(Y)$ for each function $f:(0,+\infty)\to (-\infty,+\infty)$ having a c.m derivative, provided the expectation exist.
		\item $\bbE f(X)\leq \bbE f(Y)$ for each  function $f:(0,+\infty)\to [0,+\infty)$ having a c.m derivative.
	\end{myenumerate}
\end{proposition}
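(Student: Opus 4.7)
The plan is to use the Bernstein integral representation of completely monotone functions (already stated just before the proposition) to reduce everything to the defining inequality $\bbE[e^{-\lambda X}] \geq \bbE[e^{-\lambda Y}]$ for every $\lambda \geq 0$, and, in the converse directions, to exhibit specific test functions of the form $\pm e^{-\lambda x}$ or $1-e^{-\lambda x}$ that recover the Laplace transform order.

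First I would prove the equivalence $X \leqLt Y \iff (1)$. If $X \leqLt Y$ and $f \in U_{cm}$, write $f(x) = \int_0^\infty e^{-tx}\, d\mu(t)$; by Tonelli (everything is non-negative), one has
\begin{equation*}
\bbE f(X) = \int_0^\infty \bbE[e^{-tX}]\, d\mu(t) \geq \int_0^\infty \bbE[e^{-tY}]\, d\mu(t) = \bbE f(Y),
\end{equation*}
since the definition of $\leqLt$ gives $\bbE[e^{-tX}] \geq \bbE[e^{-tY}]$ pointwise in $t \geq 0$. Conversely, the function $f_\lambda(x) = e^{-\lambda x}$ lies in $U_{cm}$ for each $\lambda \geq 0$, and applying (1) to $f_\lambda$ is exactly the definition of $\leqLt$.

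Next I would handle (2). If $f' \in U_{cm}$, write $f'(s) = \int_0^\infty e^{-ts}\, d\mu(t)$ and integrate from some $a>0$ to $x$; Fubini (again licit by positivity) yields the representation
\begin{equation*}
f(x) = f(a) + \int_0^\infty \frac{e^{-ta}-e^{-tx}}{t}\, d\mu(t).
\end{equation*}
Taking expectations and using $X \leqLt Y$ fibrewise in $t$ (so that $e^{-ta}-\bbE[e^{-tX}] \leq e^{-ta}-\bbE[e^{-tY}]$) gives $\bbE f(X) \leq \bbE f(Y)$, under the integrability assumption stated in (2). For the converse, apply (2) to $f(x) = -e^{-\lambda x}$, whose derivative $\lambda e^{-\lambda x}$ is completely monotone; this returns precisely the defining inequality of $\leqLt$.

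Finally, (3) is a restriction of (2) to non-negative test functions, so (2) $\Rightarrow$ (3) is automatic; for (3) $\Rightarrow X \leqLt Y$ I would apply (3) to $f(x) = 1-e^{-\lambda x}$, which is non-negative on $(0,\infty)$ and has c.m.\ derivative $\lambda e^{-\lambda x}$, giving $\bbE[e^{-\lambda X}] \geq \bbE[e^{-\lambda Y}]$. The only non-routine point is checking that Fubini/Tonelli can be invoked in each step, which is the main (minor) obstacle: in (1) positivity of $\mu$ and of $e^{-tx}$ makes it automatic, while in (2) the hypothesis that $\bbE f(X)$ and $\bbE f(Y)$ both exist is what allows us to split the integral and justify the swap.
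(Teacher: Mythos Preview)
The paper does not prove this proposition; it is stated with a reference to \cite{Lin98} and used as a black box, so there is no ``paper's own proof'' to compare against. Your argument is the standard one and is essentially correct: Bernstein's representation plus Tonelli gives the forward implications, and the specific test functions $e^{-\lambda x}$, $-e^{-\lambda x}$, $1-e^{-\lambda x}$ recover the converse in each case.

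Two small points worth tightening. First, in your treatment of (2) you integrate from a fixed $a>0$; for (3) it is cleaner to let $a\downarrow 0$ (the limit $f(0^+)$ exists and is finite because $f\ge 0$ is non-decreasing), which yields $f(x)=f(0^+)+\int_{[0,\infty)} t^{-1}(1-e^{-tx})\,d\mu(t)$ with a non-negative integrand, so Tonelli applies without any integrability hypothesis and the expectations are automatically well-defined in $[0,\infty]$. Second, the Bernstein measure $\mu$ lives on $[0,\infty)$, not $(0,\infty)$, so you should read $t^{-1}(1-e^{-tx})$ as $x$ at $t=0$; this covers the possible linear part of $f$.
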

\begin{remark}\label{Remark: Lt}
	The function $f(x)=\log x $ and $f(x)=x^{\alpha}$ with $\alpha \in (0,1]$ have  c.m. derivatives. Then, $X\leqLt Y$ implies
	\begin{align*}
	\bbE[\log X]&\leq \bbE[\log Y],\\
	\bbE[ X^{\alpha}]&\leq \bbE[Y^{\alpha}].
	\end{align*}
 Moreover it is also true that:
 \begin{equation*}
 \log X\leqLt \log Y.
 \end{equation*}	
 The proof of this fact will be given later in page 12.
	
\end{remark} 

\subsection{Correlation and stochastic order}
One of the main difficulties in studying directed polymers comes from its complicated correlation structure. In this section, we would like to recall some useful tools to compare random vectors and in our case different disordered systems. In the Gaussian case, the main tool is the Slepian's lemma \cite{Ka86}.
\begin{theorem}[Slepian's lemma]\label{lem:Slepian}
	Let $(X_i)_{i\leq n}$, $(Y_i)_{i\leq n}$ two gaussian vectors with mean $0$, and $F:\R^n\to \R$  all of whose partial derivatives up to second order have subgaussian growth. If
	\begin{equation*}
	E(X_i^2)=E(Y_i^2), \qquad \qquad E(X_iX_j)\leq E(Y_iY_j),
	\end{equation*}
	and
	$$\frac{\partial^2 F}{\partial x_i \partial x_j}\geq 0,$$
	for all $i \ne j$, then
	\begin{equation*}
	EF(X_1,\ldots,X_n)\leq EF(Y_1,\ldots,Y_n).
	\end{equation*}
\end{theorem}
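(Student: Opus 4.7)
The plan is to use the classical Gaussian interpolation argument. First I would assume without loss of generality that $(X_i)$ and $(Y_i)$ are defined on the same probability space and are independent, and introduce the interpolation
\begin{equation*}
Z_i(t) = \sqrt{1-t}\, X_i + \sqrt{t}\, Y_i, \qquad t \in [0,1],
\end{equation*}
so that $Z(0)$ and $Z(1)$ have the same laws as $X$ and $Y$ respectively, and $Z(t)$ is a centered Gaussian vector with covariance $C_{ij}(t) = (1-t) E[X_i X_j] + t\, E[Y_i Y_j]$. Then I would set $\varphi(t) = E[F(Z(t))]$ and prove the theorem by showing $\varphi'(t) \geq 0$ on $(0,1)$.

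To compute $\varphi'(t)$, I would differentiate under the expectation (justified by the subgaussian growth condition on the partials of $F$ together with Gaussian moment bounds) to obtain
\begin{equation*}
\varphi'(t) = \sum_{i=1}^n E\!\left[ \frac{\partial F}{\partial x_i}(Z(t)) \cdot \frac{d Z_i(t)}{dt} \right]
= \sum_{i=1}^n E\!\left[ \frac{\partial F}{\partial x_i}(Z(t)) \left( \tfrac{1}{2\sqrt{t}} Y_i - \tfrac{1}{2\sqrt{1-t}} X_i \right)\right].
\end{equation*}
The next step is the key one: apply the Gaussian integration by parts identity, which states that for a centered Gaussian vector $G$ and a smooth function $g$ of moderate growth one has $E[G_i \, g(G)] = \sum_j E[G_i G_j]\, E[\partial_j g(G)]$. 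Using independence of $X$ and $Y$ and applying this identity to the $X$-part and the $Y$-part separately yields, after simplification,
\begin{equation*}
\varphi'(t) = \tfrac{1}{2}\sum_{i,j=1}^n \big( E[Y_i Y_j] - E[X_i X_j] \big)\, E\!\left[ \frac{\partial^2 F}{\partial x_i \partial x_j}(Z(t)) \right].
\end{equation*}

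At this point I would finish by invoking the two hypotheses. The diagonal terms $i = j$ vanish because $E[X_i^2] = E[Y_i^2]$ by assumption. For the off-diagonal terms $i \neq j$, the covariance differences are non-negative and the mixed partials $\partial^2 F/\partial x_i \partial x_j$ are non-negative, so each summand is $\geq 0$. Therefore $\varphi'(t) \geq 0$ on $(0,1)$, and integrating gives $E[F(Y)] = \varphi(1) \geq \varphi(0) = E[F(X)]$.

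The main technical obstacle is the rigorous justification of differentiation under the expectation and of the Gaussian integration by parts in the form above; this is exactly where the subgaussian growth assumption on the partial derivatives of $F$ up to order two is used to control $E[|\partial_i F(Z(t))|\,|Z_j(t)|]$ and $E[|\partial_{ij} F(Z(t))|]$ uniformly in $t$. A standard way to handle this cleanly is to first prove the inequality for a dense subclass (say $F \in C^2_b$ with bounded partials), and then extend by a truncation/approximation argument exploiting the Gaussian tails and the subgaussian growth hypothesis.
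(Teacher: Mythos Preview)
Your proof is correct and follows the standard Gaussian interpolation argument due to Kahane. Note, however, that the paper does not actually supply a proof of this statement: Slepian's lemma is simply recalled from the literature (with a citation to \cite{Ka86}) as a tool, and then applied in the proof of Theorem~\ref{re:gaussian}. So there is no ``paper's own proof'' to compare against; your argument is the classical one and would serve perfectly well if a proof were required.
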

This result is a powerful and elegant way to compare the observables of two different Gaussian vectors. In particular in order to compare the maximum of two vectors \cite{A90}:
\begin{corollary}\label{Cor: Slepian inequlity}
	Let $X,Y$ be centered Gaussian vectors. Assume that $\bbE [X_i^2]=\bbE[ Y_i^2]$ and $\bbE [X_iX_j] \geq \bbE [Y_iY_j]$ for all $i\ne j$. Then $\max_i X_i$ is stochastically dominated by $\max_i Y_i$, i.e.
	\begin{equation*}
	\max_i X_i \leq_{st} \max_i Y_i.
	\end{equation*}
\end{corollary}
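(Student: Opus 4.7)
The plan is to rewrite $\{\max_i X_i\leq t\}=\bigcap_{i=1}^n\{X_i\leq t\}$ and reduce the claim to the inequality
\[
\bbP(X_1\leq t,\dots,X_n\leq t)\geq \bbP(Y_1\leq t,\dots,Y_n\leq t)\qquad\text{for all }t\in\bbR,
\]
which, in view of the characterisation of the usual stochastic order recalled in Section~\ref{sec:pre}, is equivalent to $\max_i X_i\leq_{st}\max_i Y_i$. The natural way to obtain such an inequality from a joint Gaussian assumption on covariances is to apply Slepian's lemma (Theorem~\ref{lem:Slepian}) to a smooth product test function approximating $\prod_i \ind_{\{x_i\leq t\}}$.

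Concretely, I would fix $t\in\bbR$ and choose a non-increasing mollifier $h_\varepsilon:\bbR\to[0,1]$ of class $C^\infty$ such that $h_\varepsilon(x)\to \ind_{\{x\leq t\}}$ as $\varepsilon\downarrow 0$, for instance $h_\varepsilon(x):=1-\Phi\big((x-t)/\varepsilon\big)$ with $\Phi$ the standard Gaussian c.d.f. Set
\[
F(x_1,\dots,x_n):=\prod_{i=1}^n h_\varepsilon(x_i).
\]
All partial derivatives of $F$ up to any order are finite products of uniformly bounded Gaussian densities and polynomials in the $h_\varepsilon(x_i)$, hence in particular have subgaussian growth. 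Since $h_\varepsilon'\leq 0$ and $h_\varepsilon\geq 0$, a direct computation gives
\[
\frac{\partial^2 F}{\partial x_i\partial x_j}(x)=h_\varepsilon'(x_i)\,h_\varepsilon'(x_j)\prod_{k\neq i,j}h_\varepsilon(x_k)\geq 0\qquad(i\neq j),
\]
so the hypothesis of Theorem~\ref{lem:Slepian} on $F$ is met.

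Now I would apply Theorem~\ref{lem:Slepian} with the roles of the two Gaussian vectors swapped: the hypotheses $\bbE[Y_i^2]=\bbE[X_i^2]$ and $\bbE[Y_iY_j]\leq \bbE[X_iX_j]$ for $i\neq j$ hold by the assumption of the corollary, and the conclusion then reads $\bbE[F(Y)]\leq\bbE[F(X)]$. Letting $\varepsilon\downarrow 0$ and using dominated convergence (with $0\leq F\leq 1$), together with the fact that the boundary $\{x\in\bbR^n:\max_i x_i=t\}$ has zero measure under any non-degenerate centered Gaussian law, yields
\[
\bbP\big(\max_i Y_i\leq t\big)\leq \bbP\big(\max_i X_i\leq t\big),
\]
as required. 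The only genuinely delicate point is the reversal of direction in Slepian's inequality (larger covariances produce a \emph{stochastically smaller} maximum, because highly correlated coordinates are less likely to have one among them deviate far above the others), which is handled automatically by the swap of $X$ and $Y$; the remaining steps are standard smoothing and limit arguments.
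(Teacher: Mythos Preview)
The paper does not actually prove this corollary; it is simply quoted with a reference to Adler's lecture notes~\cite{A90}. Your argument is the standard derivation of Slepian's inequality for the maximum from the comparison lemma (Theorem~\ref{lem:Slepian}): approximate $\ind_{\{x_i\le t\}}$ by a smooth decreasing $h_\varepsilon$, observe that the product $F=\prod_i h_\varepsilon(x_i)$ has non-negative mixed second partials, apply the lemma with the roles of $X$ and $Y$ swapped, and pass to the limit. This is correct and is essentially the proof one finds in~\cite{A90}.

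One small remark on the limiting step: you do not need the joint Gaussian law to be non-degenerate. It suffices that each coordinate $X_i$ (and $Y_i$) has positive variance, since then $\bbP(\max_i X_i=t)\le\sum_i\bbP(X_i=t)=0$; the joint covariance matrix may well be singular. If some coordinate has zero variance the statement reduces trivially. Also note that the paper's displayed definition of $\leq_{st}$ in Section~\ref{sec:pre} contains a sign typo; the correct direction (consistent with the integral-order definition in the Introduction and with your computation) is $\bbP(X\le t)\ge\bbP(Y\le t)$.
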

In Section 3, we will provide an application of Slepian's lemma (in the case of  Gaussian environment) to reprove a classic result in \cite{CV06}. 
For general type of distributions, one can try to estimate an observable of a random vector by using the notion of association.  
\begin{definition}
	A $\R^n$ valued random vector $X=(X_1,X_2,\ldots,X_n)$ is called  associated if the inequality  
	\begin{equation*}
	\bbE[f(X)g(X)]\geq \bbE[f(X)]\bbE[g(X)],
	\end{equation*}
	holds for all coordinatewise non-decreasing (or non-increasing) mappings $f,g:\R^n\to \R$, for which the involved expectations exist. 
\end{definition}

The real vector $\{\overline{X}_1,\ldots,\overline{X}_n\}$ is called an independent version of the random vector $\{X_1,..,X_n\}$ if the  variables $\{\overline{X}_1,\ldots,\overline{X}_n\}$ are mutually independent, and for all $1\leq i\leq n$, the coordinates  $X_i$ and $\overline{X}_i$ have the same  distribution. If X is associated, then an immediate induction shows that for all $n$-tuples of non-negative non-decreasing (or non-increasing) $f_i:\R\to\R_+$, then
\begin{equation}
\label{lem:associated}
\bbE\Big[\prod_{i=1}^{n}f_i(X_i)\Big]\geq \prod_{i=1}^{n}\bbE\Big[f_i(X_i)\Big]=\bbE\Big[\prod_{i=1}^{n}f_i(\overline{X}_i)\Big].
\end{equation}
{We stress on the requirement that $f_i\geq 0\ \textrm{for all}\  i$.}
In general, computing explicitly the joint distribution of a vector $(X_i)_{i\leq n}$ is not trivial task, but the association property can often be established using the following properties \cite{BB03} :
\begin{proposition}\label{pro: asso}
	\  
	\begin{itemize}
		\item The union of independent sets of associated random variables forms a set of associated random variables.
		\item For all non-decreasing function $\phi:\R^n\to\R$, and all set of associated random variables $\{X_1,..,X_n\},$ the set of random variables 
		\begin{equation*}
		\{\phi(X_1,...,X_n),X_1,..,X_n\} 
		\end{equation*}
		is associated.
	\end{itemize}
\end{proposition}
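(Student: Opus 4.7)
The plan is to treat the two items separately, in each case reducing to a direct application of the definition of association combined with a simple conditioning or composition argument.

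For item (i), it suffices by induction on the number of families to treat two independent associated families $\{X_1,\ldots,X_n\}$ and $\{Y_1,\ldots,Y_m\}$ and show their union is associated. Given coordinatewise non-decreasing $f,g:\R^{n+m}\to\R$, I would condition on $Y$. For each fixed $y$, the maps $x\mapsto f(x,y)$ and $x\mapsto g(x,y)$ are non-decreasing on $\R^n$, so association of the $X$-family yields
\[
\E\big[f(X,Y)g(X,Y)\mid Y=y\big]\geq F(y)G(y),\qquad F(y):=\E[f(X,y)],\ G(y):=\E[g(X,y)].
\]
Because $f$ and $g$ are also non-decreasing in the $y$-coordinates, $F$ and $G$ are non-decreasing on $\R^m$. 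Integrating in $Y$ and using association of the $Y$-family gives
\[
\E[f(X,Y)g(X,Y)]\geq \E[F(Y)G(Y)]\geq \E[F(Y)]\E[G(Y)]=\E[f(X,Y)]\E[g(X,Y)],
\]
which is the desired inequality. The non-increasing case follows identically, or by applying the non-decreasing case to $-f,-g$.

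For item (ii), the key observation is that pre-composition with a coordinatewise non-decreasing map preserves coordinatewise monotonicity. Given coordinatewise non-decreasing $f,g:\R^{n+1}\to\R$, I would set
\[
\tilde f(x):=f\big(\phi(x),x_1,\ldots,x_n\big),\qquad \tilde g(x):=g\big(\phi(x),x_1,\ldots,x_n\big),\qquad x=(x_1,\ldots,x_n).
\]
Since $\phi$ is non-decreasing in every coordinate and $f$ is non-decreasing in each of its $n+1$ arguments, $\tilde f$ is non-decreasing on $\R^n$; the same holds for $\tilde g$. Association of $\{X_1,\ldots,X_n\}$ then gives $\E[\tilde f(X)\tilde g(X)]\geq \E[\tilde f(X)]\E[\tilde g(X)]$, which is precisely the association inequality for $\{\phi(X),X_1,\ldots,X_n\}$ tested against $f,g$.

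No serious analytic obstacle arises. The only points of care are the joint measurability in $y$ of the conditional expectations in item (i), which is automatic by Fubini on the product space $\R^n\times\R^m$ because the two families are independent, and the usual integrability hypothesis on $fg$ for the test functions. The extension from two to any finite collection of independent families in item (i) is a routine induction, so nothing beyond the two-family case needs to be written explicitly.
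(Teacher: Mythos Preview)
Your argument is correct and is essentially the standard proof of these two classical facts about association. Note, however, that the paper does not actually supply a proof of this proposition: it is stated as a known result with a citation to \cite{BB03}, so there is no ``paper's own proof'' to compare against. Your treatment is exactly what one finds in the cited reference.
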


\section{m-tree domination}\label{sec:mtree}
\subsection{m-tree model}

Let $(\textbf{x}=(x_n)_{n\in\N},(P^x)_{x\in\Z^d})$ denote the simple random walk on the $d$-dimension integer lattice $\Z^d$. We denote $(\omega(n,x))_{(n,x)\in \N\times\Z^d }$ the random environment on $(\Omega,\cG, \bbP)$.We suppose from now on that the exponential moment of $\omega$ exists at least for certain small $\beta >0$.
We define for $k<n$,
\begin{equation*}
H_{k,n}(\textbf{x})= \sum_{j=1}^{n-k}\omega(k+j,x_j),
\end{equation*}
and the point-to-point partition function is given by
\begin{equation*}
Z_{k,n}^{x_0}(y)=P^{x_0}\Big(e^{\beta H_{k,n}(\textbf{x})} 1_{ x_{n-k}=y}\Big),
\end{equation*}
and the point-to-line partition function is given by 
\begin{equation*}
Z_{k,n}^{x_0}=P^{x_0}\Big(e^{\beta H_{k,n}(\textbf{x})} \Big).
\end{equation*}
If $x_0=0$ and $k=0$, we will denote shortly as $Z_n$ for the point-to-line partition functions, and $Z_n(y)$ for the point-to-point partition functions. Moreover, we obtain the following recursion for $ Z_{k,n}^{x_0}$
\begin{equation}\label{equ:rec}
Z_{k,n}^{x_0}=\frac{1}{2d}\sum_{i=1}^{d} e^{\beta \omega_{k+1,x_0+e_i}}  Z_{k+1,n}^{x_0+e_i}.
\end{equation}
where $\{e_i, i\leq d\}$ is the canonical basis of $Z^d$. 
This formula is quite simple but hard to analyse because, for every $i,j$, the variables $Z_{k+1,n}^{x_0+e_i}$  and $Z_{k+1,n}^{x_0+e_j}$ are correlated. To obtain the m-tree approximation to this problem, one can iterate (\ref{equ:rec})  $m$ times exactly and then neglects the remaining correlations. In other words one takes account of the correlations on first $m$ steps of the lattice and then constructs a tree from this pattern, by neglecting other correlations. The construction of a 3-tree for directed polymer in dimension $1+1$ is illustrated in  Figure 1. 
\begin{figure}\label{Fig:3-tree}
	\begin{center}

		\begin{tikzpicture}[scale=0.3]
		\draw[-,thick, red](-15,-15)--(0,0)--(15,-15);
		\draw[-,thick,red](-4,-4)--(7,-15);
		\draw[-,thick,red](-4,-4)--(7,-15);
		\draw[-,thick,red](-8,-8)--(-1,-15); 
		\draw[-,thick,red](-12,-12)--(-9,-15);
		\draw[-,thick,red](-13,-13)--(-11,-15)--(-10,-14);
		\draw[-,thick,red](-14,-14)--(-13,-15)--(-11,-13);
		\draw[-,thick,red](4,-4)--(-7,-15);
		\draw[-,thick,red](8,-8)--(1,-15);
		\draw[-,thick,red](12,-12)--(9,-15);  	
		\draw[-,thick,red](-5,-13)--(-3,-15)--(-2,-14);
		\draw[-,thick,red](-6,-14)--(-5,-15)--(-3,-13);
		\draw[-,thick,red](3,-13)--(5,-15)--(6,-14);
		\draw[-,thick,red](2,-14)--(3,-15)--(5,-13);
		\draw[-,thick,red](11,-13)--(13,-15)--(14,-14);
		\draw[-,thick,red](10,-14)--(11,-15)--(13,-13);
		\draw[thick, blue](-12,-12)--(12,-12);
		\draw[thick, blue](-15,-15)--(15,-15);
		\node[blue] at (-15,-12){{\bf $m=3$}};
		\node[blue] at (-18,-15){{\bf $2m=6$}};
		
		\fill[red] (-15,-15)--(-16,-16)--(-14,-16)--(-15,-15);
		\fill[red] (-13,-15)--(-14,-16)--(-12,-16)--(-13,-15);
		\fill[red] (-11,-15)--(-12,-16)--(-10,-16)--(-11,-15);
		\fill[red] (-9,-15)--(-10,-16)--(-8,-16)--(-9,-15);
		\fill[red] (-7,-15)--(-8,-16)--(-6,-16)--(-7,-15);
		\fill[red] (-5,-15)--(-6,-16)--(-4,-16)--(-5,-15);
		\fill[red] (-3,-15)--(-4,-16)--(-2,-16)--(-3,-15);             
		\fill[red] (-1,-15)--(-2,-16)--(0,-16)--(-1,-15);     
		\fill[red] (1,-15)--(0,-16)--(2,-16)--(1,-15);     
		\fill[red] (3,-15)--(2,-16)--(4,-16)--(3,-15);     
		\fill[red] (5,-15)--(4,-16)--(6,-16)--(5,-15);
		\fill[red] (7,-15)--(6,-16)--(8,-16)--(7,-15);
		\fill[red] (9,-15)--(8,-16)--(10,-16)--(9,-15);    
		\fill[red] (11,-15)--(10,-16)--(12,-16)--(11,-15);    
		\fill[red] (13,-15)--(12,-16)--(14,-16)--(13,-15);
		\fill[red] (15,-15)--(14,-16)--(16,-16)--(15,-15);
		\end{tikzpicture}
	\end{center}
	\caption{3-tree drawn for d=2}
\end{figure}
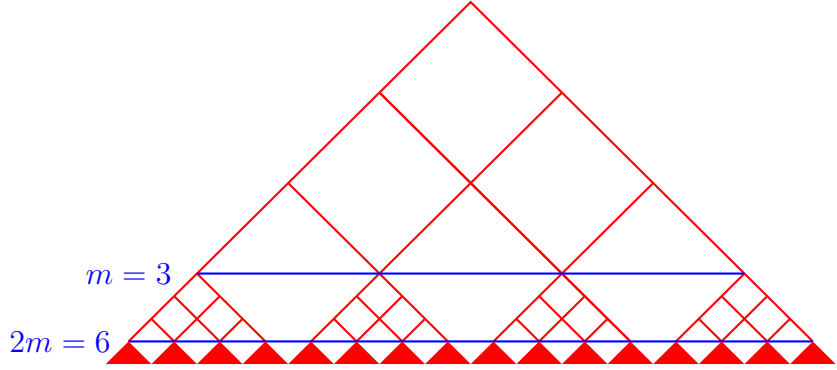
Mathematically, the m-tree can be defined by using the multiplicative cascades formalism \cite{RV13}\cite{Liu00}, that we introduce first. 

Let $N\geq 2$ be a fixed integer and $[\![1,N]\!]=\{1,2,\cdots,N\}$
\begin{equation*}
U=\bigcup_{k\in \N}[\![1,N]\!]^k,
\end{equation*}
be the set of all finite words $u=u_1u_2...u_k$ of elements in the alphabet $\{1,2,...,N-1,N\}$. For  $u=u_1u_2...u_k$ and  $v=v_1v_2...v_k$ two finite words, let $uv$ denote the words $u_1...u_kv_1...v_k$. Given a probability distribution $q$ on $\R_+^N$, it is known that there exist a probability space with probability measure denoted by $\bbP$ and random variables $(A_u)_{u\in U}$ defined on this space, such that the random vectors $(A_{u1},...,A_{uN})_{u\in U}$ form an i.i.d sequence with common distribution $q$.
\begin{definition}
	We define the cascade $W_n^{\textrm{casc}}$ by
	\begin{equation}\label{for:tree_partition}
	W_n^{\textrm{casc}}=\sum_{u_1,..,u_n\in[\![1,N]\!]} A_{u_1}A_{u_1u_2}...A_{u_1..u_n}.
	\end{equation}
	Let denote the filtration by 
	\begin{equation*}
	\mathcal{G}_n:= \sigma\{A_u; \vert u\vert\leq n\},
	\end{equation*}	
\end{definition}
If $\sum_{i=1}^{N} \bbE A_{u_i}=1$ then  $(W_n^{\textrm{casc}},\mathcal{G}_n)$ is a non negative martingale. The m-tree corresponds to a particular choice of $q$. Let $m\geq 1$ and define $L_m$ to be the set of points visited by the simple random walk at time $m$:
\begin{equation}\label{def:Lm}
L_m=\{x\in\Z^d; P(x_m=x)>0\}.
\end{equation}
\begin{definition}\label{def:mtree}
	The partition function $(Z_n^{m-tree})$, $n\in m\mathbb{N}$, of the m-tree is the multiplicative cascade associated with the alphabet $L_m$ and $q=q_n$ given by  the joint law of point-to point partition functions , $(Z_m(x))_{x\in L_m}$ at time $m$.
	In particular $Z_n^{m-tree}\stackrel{\mathcal{D}}{=}Z_m^{pol}$.
\end{definition}
\begin{remark}

	By the same construction, for a fixed endpoint $x\in \mathbb{Z}^d$, we can also define the point-to-point partition function for the m-tree model and we denote it
	$Z^{\textrm{m-tree}}_{n}(x)$. More precisely, we sum over all the paths from $(0,0)$ to $x$, and the contribution of each path depends on when it went through at time $m, 2m,\cdots, lm$ as in the description of the multiplicative cascade. \medskip
\end{remark}

It is well known from the multiplicative cascade theory \cite{Liu00} that the free energy $p^{m-tree}$ exists,
$$p^{m-tree}(\beta)=\lim_{l\to \infty}\frac{\log Z_{ml}^{m-tree}}{l}.$$
Moreover it gives  an upper bound for the free energy of directed polymer (Theorem 3.3 in \cite{CV06}):
\begin{equation}\label{re:upper}
p(\beta)\leq \inf_{m\geq 1}\frac{1}{m} p^{m-tree}(\beta).
\end{equation}
\subsection{Gaussian case}
In the case of Gaussian environment, we can recover  the result (\ref{re:upper}) as a direct consequence of the following stronger theorem:
\begin{theorem}\label{re:gaussian}
	Given $m>0$, for $n=ml$, in the case of Gaussian environment we have the following inequality
	\begin{equation*}
	\bbE[\log Z_{n}^{\textrm{pol}}]\leq E[\log Z_{n}^{\textrm{m-tree}}].
	\end{equation*}
	
\end{theorem}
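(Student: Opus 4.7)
The plan is to apply Slepian's lemma (Theorem~\ref{lem:Slepian}) to two Gaussian vectors indexed by a common set in bijection with nearest-neighbor paths of length $n = ml$.

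\textbf{Step 1: A common parameterization.} Any SRW trajectory $\pi=(\pi_1,\dots,\pi_n)$ on $\bbZ^d$ decomposes uniquely into $l$ chunks of length $m$. I will encode it as a tuple $(z^1,\gs^1,\dots,z^l,\gs^l)$, where $z^k \in L_m$ (see \eqref{def:Lm}) and $\gs^k$ is an $m$-step SRW-path from $0$ to $z^k$, via
\[
\pi_{(k-1)m+j} = y^{k-1}+\gs^k_j,\qquad y^{k-1}:=z^1+\cdots+z^{k-1}.
\]
This set of tuples has cardinality $(2d)^n$. It simultaneously indexes the polymer paths and the tree paths appearing in the cascade \eqref{for:tree_partition} once the alphabet $L_m$ is used, so I can write
\[
Z_n^{\mathrm{pol}}=\frac{1}{(2d)^n}\sum_{(\vec z,\vec\gs)} e^{\beta H^{\mathrm{pol}}(\vec z,\vec\gs)},\qquad
Z_n^{\mathrm{m\text{-}tree}}=\frac{1}{(2d)^n}\sum_{(\vec z,\vec\gs)} e^{\beta \widetilde H(\vec z,\vec\gs)},
\]
where $H^{\mathrm{pol}}(\vec z,\vec\gs)=\sum_{k=1}^l\sum_{j=1}^m \omega((k-1)m+j,\,y^{k-1}+\gs^k_j)$ uses a single Gaussian environment, while $\widetilde H(\vec z,\vec\gs)=\sum_{k=1}^l\sum_{j=1}^m \omega^{(z^1\cdots z^{k-1})}(j,\gs^k_j)$ uses i.i.d.\ copies $\omega^{(u)}$ indexed by tree nodes. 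The $(2d)^{-n}$ factors produce the same additive constant in $\log$ and cancel in the inequality.

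\textbf{Step 2: Comparison of covariances.} Both families $\{H^{\mathrm{pol}}(\vec z,\vec\gs)\}$ and $\{\widetilde H(\vec z,\vec\gs)\}$ are centered Gaussian with equal variance $n$ on each coordinate. Pick two tuples $(\vec z,\vec\gs)\neq(\vec z\,',\vec\gs\,')$ and let $K$ be the largest index such that $(z^1,\dots,z^K)=(z'^1,\dots,z'^K)$ (so $y^k=y'^k$ for $k\leq K$). A straightforward bookkeeping gives
\[
\bbE\big[\widetilde H(\vec z,\vec\gs)\widetilde H(\vec z\,',\vec\gs\,')\big]=\sum_{k=1}^{K+1}\sum_{j=1}^m \ind\{\gs^k_j=\gs'^k_j\},
\]
whereas
\[
\bbE\big[H^{\mathrm{pol}}(\vec z,\vec\gs)H^{\mathrm{pol}}(\vec z\,',\vec\gs\,')\big]=\sum_{k=1}^{l}\sum_{j=1}^m \ind\{y^{k-1}+\gs^k_j=y'^{k-1}+\gs'^k_j\}.
\]
On the common prefix $k\leq K+1$ the two sums agree term by term (since $y^{k-1}=y'^{k-1}$ there); for $k>K+1$ the tree contributes $0$ while the polymer contributes a nonnegative count of accidental coincidences. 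Hence $\bbE[\widetilde H \widetilde H'] \leq \bbE[H^{\mathrm{pol}} H'^{\mathrm{pol}}]$ for every pair of distinct indices.

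\textbf{Step 3: Slepian's lemma.} Consider $F(\bx)=-\log\sum_{(\vec z,\vec\gs)} e^{\beta x_{(\vec z,\vec\gs)}}$. Its off-diagonal second derivatives are
\[
\frac{\partial^2 F}{\partial x_\alpha\,\partial x_{\alpha'}}=\frac{\beta^{2}\,e^{\beta(x_\alpha+x_{\alpha'})}}{\big(\sum_\gamma e^{\beta x_\gamma}\big)^2}\geq 0\qquad (\alpha\neq\alpha'),
\]
with the required subgaussian growth of derivatives. Combining this with Step 2 (multiplied by $\beta^2$), Theorem~\ref{lem:Slepian} yields $\bbE F(\widetilde H)\leq \bbE F(H^{\mathrm{pol}})$, which, after using $Z_n^{\cdot}=(2d)^{-n}\sum e^{\beta\cdot}$ and changing signs, is exactly $\bbE[\log Z_n^{\mathrm{pol}}]\leq \bbE[\log Z_n^{\mathrm{m\text{-}tree}}]$.

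\textbf{Main obstacle.} The only delicate point is Step 1: setting up the bijection cleanly enough that the \emph{same} index set carries both Gaussian fields, so that Slepian's equal-variance hypothesis is literally satisfied and the off-diagonal comparison of Step 2 makes sense. Once the parameterization is fixed, the covariance comparison reduces to an elementary observation about when absolute positions of two paths coincide versus when their relative increments do.
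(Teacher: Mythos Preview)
Your proposal is correct and follows essentially the same route as the paper: apply Slepian's lemma to the log-sum-exp functional, check equal variances, and show that the polymer Hamiltonian has larger off-diagonal covariances than the tree Hamiltonian. The only cosmetic difference is that you put the minus sign inside $F$ (so that the mixed partials are $\ge 0$), whereas the paper keeps $F=\log\sum e^{\beta h}$ and uses Slepian with the inequality reversed; and you spell out in Step~1 the bijection between $\Pi_n$ and tuples $(\vec z,\vec\sigma)$ that the paper leaves implicit when it writes the tree covariance as $\sum_{i=1}^n \ind_{\textbf{x}_i=\textbf{x}'_i}\prod_{k\le i/m}\ind_{\textbf{x}_{km}=\textbf{x}'_{km}}$.
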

\begin{proof}
	By the Gaussian assumption, we want to apply Slepian's lemma (\ref{lem:Slepian}). Define $\Pi_n$ the set of polymer paths $\bf x$ on $\Z^d$ of length $n$ and the two Gaussian vectors are $\big(H_n(\textbf{x}), \textbf{x}\in \Pi_n\big)$ and $\big(H_{n}^{m-tree}(\textbf{x}), \textbf{x}\in \Pi_n\big)$. The function $F$ is defined by
	$$F(h)=\log\sum_{\textbf{x}\in\Pi_n}\exp\{\beta h(\textbf{x})\}\qquad \textrm{for}\quad h=(h(\textbf{x});\textbf{x}).$$
	It is clear that $F\big((H_n(\textbf{x});\textbf{x})\big)=\log Z_{n}^{\textrm{pol}}$ and $F\big((H_{n}^{m-tree}(\textbf{x}); \textbf{x}\big)=\log Z_{n}^{\textrm{m-tree}}$, for $n=lm$.
	Moreover, if $\textbf{x},\textbf{x}'\in \Pi_n$, we have for $\textbf{x}\ne \textbf{x}'$,
	$$\frac{\partial^2 F}{\partial h(\textbf{x}) \partial h(\textbf{x}')}(h)=\beta^2 Cov_{Q_{n,\beta}^h} \big(1_{\{\textbf{x}\}},1_{\{\textbf{x}'\}}\big)= -\frac{\beta^2 \exp(h(\textbf{x})+h(\textbf{x}'))}{\big(\sum_{\textbf{x}\in\Pi_n}\exp\{\beta h(\textbf{x})\}\big)^2}\leq 0$$
	where the probability measure $Q_{n,\beta}^h$ is the one with Hamiltonian $h=(h(\textbf{x});\textbf{x})$.
	On the other hand, we can check that the energy $H_n(\textbf{x})$ is a Gaussian vector with mean $0$ and covariance
	$$\bbE\big[H_n(\textbf{x})H_n(\textbf{x}')\big]= \sum_{i=1}^{n} 1_{\{\textbf{x}_i=\textbf{x}'_i\}},$$
	and the covariance in case of the  m-tree $H_{n}^{m-tree}(\textbf{x})$ is given by
	\begin{align*}
	\bbE\big[(H_{n}^{m-tree}(\textbf{x})(H_{n}^{m-tree}(\textbf{x}')\big]&=\sum_{0\leq l\leq n/m} \Bigg(\Big[\prod_{k=0}^{l}1_{\{\textbf{x}_{km}=\textbf{x}'_{km}\}}\Big]\times \sum_{i=lm}^{((l+1)m-1)\wedge n}1_{\{\textbf{x}_i=\textbf{x}'_i\}}\Bigg).\\
	&= \sum_{i=1}^{n} \mathbb{1}_{\textbf{x}_i=\textbf{x}'_i} \prod_{k=1}^{i/m} \mathbb{1}_{\textbf{x}_{km}=\textbf{x}'_{km}}
	\end{align*}
	One can check that
	$$\bbE\big[H_n(\textbf{x})H_n(\textbf{x}')\big]\geq \bbE\big[(H_{n}^{m-tree}(\textbf{x})(H_{n}^{m-tree}(\textbf{x}')\big],$$
	and 
	\begin{equation*}
	\bbE[H_n(\textbf{x})^2]=\bbE[H_{n}^{m-tree}(\textbf{x})^2]=n.
	\end{equation*}
	
	Now Slepian's lemma (Theorem \ref{lem:Slepian}) yields the result.
\end{proof}

Moreover, by using the Slepian's inequality (Corollary \ref{Cor: Slepian inequlity} ) we obtain 
\begin{equation}\label{conj:1}
\sup_{\textbf{x}} H_n^{pol}(\textbf{x})\leq_{st} \sup_{\textbf{x}} H_n^{m-tree}(\textbf{x}).
\end{equation}
Recall that the supremum is the free energy at zero temperature. One could hope that the corresponding statement could be true at positive temperature:
\begin{equation}\label{eq:conj}
\textrm{ Is it true that:}\qquad \log Z_n^{pol}\leq_{st} \log Z_n^{m-tree}\quad ?
\end{equation}
Actually if it was true, then it would imply that :
\begin{equation}\label{eq: st}
Z_n^{pol}\leq_{st}  Z_n^{m-tree}.
\end{equation}
Since $\bbE Z_n^{pol}= \bbE Z_n^{m-tree}$, (\ref{eq: st}) implies that $Z_n^{pol} \stackrel{\mathcal{D}}{=} Z_n^{m-tree}$, which is clearly wrong. 
In the next section we will prove that the inequality (\ref{eq:conj}) is  true with respect to the Laplace transform order instead of the usual stochastic order.

\subsection{Laplace order domination with general case}
 Since the stochastic domination \ref{conj:1} does not extend to positive temperature, it is natural to look for a different order for which the extension holds, at all temperature and with any disorder. 
 In the rest of this section we will prove that Theorem \ref{re:gaussian}  can be formulated within a general framework of a standard stochastic ordering and provide some direct consequences, specially an upper bound for the solution of the Stochastic Heat Equation in dimension 1.
\begin{theorem}\label{thm: main}
	Given $m\geq 1$, we have the following inequality:
	\begin{equation*}
	Z_{n}^{\textrm{pol}}\leqLt Z_{n}^{\textrm{m-tree}},
	\end{equation*}
	and for a fixed endpoint $x$:
	\begin{equation*}
	Z_{n}^{\textrm{pol}}(x)\leqLt Z_{n}^{\textrm{m-tree}}(x).
	\end{equation*}
\end{theorem}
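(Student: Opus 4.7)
The plan is to induct on $l$, where $n = lm$, combining Proposition \ref{theo:Lin} (which tests the Laplace transform order against completely monotone functions) with the positive-association machinery of Proposition \ref{pro: asso}. By Proposition \ref{theo:Lin} it suffices to check, for every $l \geq 1$ and every $\lambda \geq 0$,
\[
\bbE[e^{-\lambda Z_{lm}^{\textrm{pol}}}] \;\geq\; \bbE[e^{-\lambda Z_{lm}^{\textrm{m-tree}}}],
\]
together with its point-to-point counterpart. The base case $l = 1$ is built into Definition \ref{def:mtree}, which gives $Z_m^{\textrm{m-tree}} \stackrel{\mathcal{D}}{=} Z_m^{\textrm{pol}}$.

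For the inductive step I factor each partition function through time $m$. On the polymer side,
\[
Z_{(l+1)m}^{\textrm{pol}} \;=\; \sum_{y \in L_m} Z_m(y)\, X_y,
\]
where $X_y$ is the point-to-line partition function started at $(m,y)$, built from the environment on the time slice $\{m+1,\dots,(l+1)m\}$. Three structural facts drive the argument: each $X_y$ has the marginal law of $Z_{lm}^{\textrm{pol}}$; the vector $(X_y)$ is independent of $(Z_m(y))$ since the two involve disjoint time slices of $\omega$; and since $\beta \geq 0$, each $X_y$ is a non-decreasing function of the independent random variables $\{e^{\beta \omega(i,z)} : i > m\}$, so Proposition \ref{pro: asso} yields that $(X_y)_{y \in L_m}$ is positively associated. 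On the m-tree side, splitting the cascade at the root gives
\[
Z_{(l+1)m}^{\textrm{m-tree}} \;=\; \sum_{y \in L_m} A_y\, B_y,
\]
with $(A_y) \stackrel{\mathcal{D}}{=} (Z_m(y))_{y \in L_m}$, with $(B_y)$ an i.i.d.\ family of copies of $Z_{lm}^{\textrm{m-tree}}$, and with the two families mutually independent. Coupling so that $A_y = Z_m(y)$ almost surely is harmless.

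Conditioning on $(Z_m(y))_{y \in L_m}$ and setting $c_y := \lambda Z_m(y) \geq 0$, the inductive step reduces to
\[
\bbE\!\Big[\prod_{y} e^{-c_y X_y}\Big] \;\geq\; \bbE\!\Big[\prod_{y} e^{-c_y B_y}\Big].
\]
The left-hand side is bounded below by $\prod_y \bbE[e^{-c_y X_y}]$ using positive association of $(X_y)$ applied to the non-negative non-increasing functions $x \mapsto e^{-c_y x}$, which is inequality (\ref{lem:associated}) in its non-increasing version. The inductive hypothesis, applied marginally, gives $X_y \leqLt B_y$, hence $\bbE[e^{-c_y X_y}] \geq \bbE[e^{-c_y B_y}]$ for every $y$ and every $c_y \geq 0$; multiplying these non-negative quantities and invoking independence of the $B_y$ rebuilds the right-hand side. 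Integrating over $(Z_m(y))_{y \in L_m}$ closes the induction. The point-to-point statement is handled identically, with $X_y, B_y$ taken as the point-to-point partition functions from $(m,y)$ to $(n, x)$ and the point-to-point variant of the hypothesis carried in parallel.

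The main obstacle, and the reason the Slepian argument of the Gaussian case must be replaced, is establishing positive association of $(X_y)_{y \in L_m}$ for a general environment. The sign condition $\beta \geq 0$ is indispensable there: it is what makes each $X_y$ monotone in $\omega$ and allows Proposition \ref{pro: asso} to transport association from the i.i.d.\ environment to the family of polymer partition functions, after which the Laplace transform order is the natural setting to exploit the contrast between the correlated polymer and the independent cascade.
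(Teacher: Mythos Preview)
Your proof is correct and rests on the same two ingredients as the paper's: positive association of partition functions (as monotone functions of the i.i.d.\ environment, via Proposition~\ref{pro: asso}) together with the product structure of the exponential test functions, which converts association into the desired Laplace-transform inequality through (\ref{lem:associated}). The only difference is organizational: you run a forward induction on the number $l$ of $m$-blocks, splitting at time $m$ and invoking the hypothesis on the length-$lm$ tails $X_y$; the paper instead fixes $n=km$ and peels off one slab at a time from the end, decorrelating the partition functions on $[(k-h)m,(k-h+1)m]$ at step $h$ without ever appealing to a shorter-polymer hypothesis. Your packaging is slightly cleaner, since it exploits the branching recursion of the cascade directly, whereas the paper's version makes the ``tree-ification'' of the lattice polymer explicit slab by slab; both arguments use $\beta\geq 0$ in exactly the same way to ensure monotonicity and hence association.
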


Before proving this theorem, let us derive some direct consequences from the Laplace transform order properties: 
\begin{corollary}\label{Col:main}
	We have for all  $\alpha\in(0,1)$ :
	\begin{align*}
	\log Z_n^{pol}&\leq_{Lt} \log Z_n^{m-tree},\\
	\bbE[\log Z_{n}^{\textrm{pol}}]&\leq	\bbE[\log Z_{n}^{\textrm{m-tree}}],\\
	\bbE[ \big(Z_{n}^{\textrm{pol}}\big)^{\alpha}]&\leq\bbE[\big( Z_{n}^{\textrm{m-tree}}\big)^{\alpha}] ,\\
	\bbE\big[Z_{n}^{\textrm{pol}} \log Z_{n}^{\textrm{pol}}\big]&\geq\bbE\big[Z_{n}^{\textrm{m-tree}} \log Z_{n}^{\textrm{m-tree}}\big].
	\end{align*}
\end{corollary}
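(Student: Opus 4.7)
Bounds (ii) and (iii) of the corollary follow directly from Theorem \ref{thm: main} combined with the characterisations recorded in Remark \ref{Remark: Lt}: one applies Proposition \ref{theo:Lin}(2) to $f(x)=\log x$ and to $f(x)=x^\alpha$ with $\alpha\in(0,1]$, whose derivatives $1/x$ and $\alpha x^{\alpha-1}$ are completely monotone on $(0,+\infty)$. For (i), which also disposes of the deferred statement in Remark \ref{Remark: Lt}, unwinding the definition of $\leqLt$ reduces matters to showing that for every $\lambda\geq 0$
\begin{equation*}
\bbE\big[(Z_n^{\textrm{pol}})^{-\lambda}\big]\ \geq\ \bbE\big[(Z_n^{\textrm{m-tree}})^{-\lambda}\big].
\end{equation*}
This is immediate from Proposition \ref{theo:Lin}(1) applied to the completely monotone function $x\mapsto x^{-\lambda}$, since $(-1)^k(x^{-\lambda})^{(k)}=\lambda(\lambda+1)\cdots(\lambda+k-1)\,x^{-\lambda-k}\geq 0$.

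The genuinely delicate inequality is the last one, because $x\mapsto x\log x$ is neither completely monotone nor has a completely monotone derivative, so Proposition \ref{theo:Lin} cannot be invoked directly. The plan is a differentiation-at-the-endpoint argument. Introduce
\begin{equation*}
g(\alpha):=\bbE\big[(Z_n^{\textrm{m-tree}})^\alpha\big]-\bbE\big[(Z_n^{\textrm{pol}})^\alpha\big],\qquad \alpha\in(0,1].
\end{equation*}
By bound (iii), $g\geq 0$ on $(0,1)$. A short computation from the multiplicative cascade representation of Definition \ref{def:mtree}, together with $\bbE[Z_n^{\textrm{pol}}]=e^{n\lambda(\beta)}$, shows that $\bbE[Z_n^{\textrm{m-tree}}]=e^{n\lambda(\beta)}$ as well, so $g(1)=0$. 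Consequently the left derivative at $1$ satisfies $g'(1^-)\leq 0$, and differentiating under the expectation gives
\begin{equation*}
g'(1^-)=\bbE\big[Z_n^{\textrm{m-tree}}\log Z_n^{\textrm{m-tree}}\big]-\bbE\big[Z_n^{\textrm{pol}}\log Z_n^{\textrm{pol}}\big]\leq 0,
\end{equation*}
which is exactly the fourth inequality of the corollary.

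The main obstacle is rigorously justifying the interchange of derivative and expectation at $\alpha=1$. I expect this to follow from standard dominated convergence, using the assumed exponential moments of $\omega$: these yield $\bbE[(Z_n)^{1+\delta}]<\infty$ for a small $\delta>0$ (for both partition functions), which in turn produces an integrable envelope for $Z^\alpha|\log Z|$ on a left neighbourhood of $\alpha=1$ and hence validates the differentiation.
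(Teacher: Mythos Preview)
Your proposal is correct and follows essentially the same route as the paper: items (i)--(iii) are handled identically via Proposition~\ref{theo:Lin} applied to $x^{-\lambda}$, $\log x$, and $x^\alpha$, and for the last inequality the paper likewise uses $\bbE[Z_n^{\textrm{pol}}]=\bbE[Z_n^{\textrm{m-tree}}]$ together with (iii), writing the difference quotient $\bbE\big[\tfrac{Z^\alpha-Z}{\alpha-1}\big]$ and letting $\alpha\to 1^-$, which is exactly your $g'(1^-)\le 0$ argument. If anything, you are more careful than the paper in flagging the need to justify the interchange of limit and expectation at $\alpha=1$.
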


\begin{proof}[Proof of Theorem \ref{thm: main}]
	
	We will present here only the proof for the case of point-to-line partitions function. The proof for the point-to-point case is similar. The proof is based on the simple fact that given a set of  pair of times $\{(m_1,n_1),\ldots,(m_k,n_k)\}$  then the variables  $\{Z_{m_1,n_1}^{x_1}(x_1'),\ldots,Z_{m_k,n_k}^{x_k}(x_k')\}$ are positively associated ( as can be seen from Proposition \ref{pro: asso}). Here $Z_{m_1,n_1}^{x_1}(x_1')$ refers to the partition function of lattice polymer model, as well as $Z_n^{pol}$. Let us fix $m\in\N$ and  consider the point-to-line partition function $Z_{km}$. Recall that the notation of $L_m$ is given in (\ref{def:Lm}).
	We will prove by induction on $h\geq 1$ that there exists a set of random variables $$\Big(\big(\overline{Z}_{im,(i+1)m}^x(x')\big)_{(x,x')\in L_{im}\times L_{(i+1)m}}\Big)_{k-h\leq i\leq k},$$
	such that the random vectors $$\big(\overline{Z}_{im,(i+1)m}^x(x')\big)_{(x,x')\in L_{im}\times L_{(i+1)m}}$$ with $k-h\leq i\leq k$ are mutually independent and have the same law as $\big(\overline{Z}_{0,m}(x')\big)_{x'\in L_{m}}$ and satisfies the following inequality:
	\begin{align*}
	\bbE\big[\exp(-\lambda Z_{km}^{pol})\big]\geq \bbE\Bigg[\prod_{(x_{k-h},\ldots,x_{k-1})\in L_{(k-h)m}\times\ldots\times L_{(k-1)m}} \exp\Big(-\lambda &Z_{0,(k-h)m}(x_{k-h})\ldots\\\overline{Z}_{(k-h)m,(k-h+1)m}^{x_{k-h}}(x_{k-h+1})
	&\overline{Z}_{(k-1)m,km}^{x_{k-1}}\Big)\Bigg].
	\end{align*}
	It clear that that if $h=k-1$ then the right hand side is the partition function of the $m$-tree, and it yields the result. 
	
	Let us begin with the case $h=1$. By Markov property, the partition function $Z_{km}$ can be represented as :
	
	\begin{equation*}
	Z_{km}^{pol}=\sum_{x\in L_{(k-1)m}} Z_{0,(n-1)m}(x)Z_{(k-1)m,km}^x.
	\end{equation*}
	Fixed $\lambda>0$, we have 
	\begin{equation*}
	\bbE\big[\exp(-\lambda Z_{km}^{pol})\big]= \bbE\Big[\prod_{x\in L_{(k-1)m}} \exp\big(-\lambda Z_{0,(n-1)m}(x)Z_{(k-1)m,km}^x\big)\Big].
	\end{equation*}
	Let us denote $G_{k-1}=\sigma\{\omega(i,x), i\leq (k-1)m \}$, then
	\begin{equation*}
	\bbE\big[\exp(-\lambda Z_{km}^{pol})\big]= \bbE\Big[\bbE\Big[\prod_{x\in L_{(k-1)m}} \exp\big(-\lambda Z_{0,(n-1)m}(x)Z_{(k-1)m,km}^x\big)\vert G_{k-1}\Big]\Big].
	\end{equation*}
	Under the conditional probability $\bbP[.\vert G_{k-1}]$, the  variables $\{\omega(i,x),\ i\geq (k-1)m+1\}$ are still independent, hence $\{Z_{(k-1)m,km}^x, x\in \bbZ^d\}$ are associated. By (\ref{lem:associated}), we obtain:
	\begin{equation*}
	\bbE[\exp(-\lambda Z_{km}^{pol})]\geq \bbE\Big[\bbE\Big[\prod_{x\in L_{(k-1)m}} \exp(-\lambda Z_{0,(n-1)m}(x)\overline{Z}_{(k-1)m,km}^x)\vert G_{k-1}\Big]\Big],
	\end{equation*}
	where the random variables $\overline{Z}_{(k-1)m,km}^x$ are mutually independent and do not depend on $G_{k-1}$ as well.
	By consequence
	\begin{equation*}
	\bbE[\exp(-\lambda Z_{km}^{pol})]\geq \bbE\Big[\prod_{x\in L_{(k-1)m}} \exp(-\lambda Z_{0,(n-1)m}(x)\overline{Z}_{(k-1)m,km}^x)\Big].
	\end{equation*}
	Now, we suppose that the result holds for $h\geq 1$ , 
	\begin{align*}
	\bbE[\exp(-\lambda Z_{km}^{pol})]\geq \bbE\Big[&\prod_{(x_{k-h},\ldots,x_{k-1})\in L_{(k-h)m}\times\ldots\times L_{(k-1)m}} \exp(-\lambda Z_{0,k-h}(x_{k-h})\\ &\overline{Z}_{(k-h)m,(k-h+1)m}^{x_{k-h}}(x_{k-h+1})\cdots\overline{Z}_{(k-1)m,km}^{x_{k-1}})\Big],
	\end{align*}
	we will prove it holds for $h+1$ as well. 
	For $x_{k-h}\in L_{(k-h)m}$, we have 
	\begin{equation*}
	Z_{0,k-h}(x_{k-h}) =\sum_{x_{k-h-1}\in L_{(k-h-1)m}}Z_{0,(k-h-1)m}(x_{k-h-1})Z_{(k-h-1)m,(k-h)m}^{x_{k-h-1}}(x_{k-h}).
	\end{equation*}
	By using again (\ref{lem:associated}), we can insert the independent structure in such a way that for all $x_{k-h-1}\in L_{(k-h-1)m}$ then the set of random variables $(\overline{Z}_{(k-h-1)m,(k-h)m}^{x_{k-h-1}}(x_{k-h}))_{x_{k-h}\in  L_{(k-h)m}}$ are mutually independent and have the same law. By consequence, we obtain: 
	
	\begin{align*}
	\bbE\big[\exp(-\lambda Z_{km}^{pol})\big]\geq \bbE\Big[\prod_{(x_{k-h-1},\ldots,x_{k-1})\in L_{(k-h-1)m}\times..\times L_{(k-1)m}} \exp\big(-\lambda &Z_{0,k-h-1}(x_{k-h-1})\ldots\\\overline{Z}_{(k-h-1)m,(k-h)m}^{x_{k-h-1}}(x_{k-h})&\overline{Z}_{(k-1)m,km}^{x_{k-1}}\big)\Big],
	\end{align*}
	and this yields the result.
\end{proof}

\begin{proof}[Proof of Corollary \ref{Col:main}]
	The first result is a direct consequence of Theorem \ref{thm: main}. Indeed since the function $x\mapsto x^{-\lambda}$, with $\lambda>0$, is a completely monotone function and $Z_n^{pol}\leq_{Lt} Z_n^{m-tree}$, we obtain:
	\begin{equation*}
	\mathbb{E}\big[(Z_n^{pol})^{-\lambda}\big]\geq \mathbb{E}\big[(Z_n^{m-tree})^{-\lambda}\big], \qquad \textrm{for}\ \lambda >0.
	\end{equation*}
	Hence
	\begin{equation*}
	\mathbb{E}\Big[\exp\big(-\lambda \log Z_n^{pol}\big)\Big]\geq\mathbb{E}\Big[\exp\big(-\lambda \log Z_n^{m-tree}\big)\Big], \qquad \textrm{for}\ \lambda >0,
	\end{equation*}
	yields that $$\log Z_n^{pol}\leqLt \log Z_n^{m-tree}.$$ 
	The second and third results are a directly consequence of Remark \ref{Remark: Lt}. For the last one, by using the fact that $\bbE[Z_{n}^{\textrm{pol}}]=\bbE[Z_{n}^{\textrm{m-tree}}]$, for $0\leq \alpha<1$ we have
	\begin{equation*}
	\bbE\Big[\frac{\big(Z_{n}^{\textrm{pol}}\big)^{\alpha}-Z_{n}^{\textrm{pol}}}{\alpha-1}\Big]\geq \bbE\Big[\frac{\big(Z_{n}^{\textrm{m-tree}}\big)^{\alpha}-Z_{n}^{\textrm{m-tree}}}{\alpha-1}\Big].
	\end{equation*}
	By taking the limit $\alpha \to 1$, we get the result.
\end{proof}
As a consequence, we can obtain an upper bound for the solution of Stochastic Heat Equation (SHE) . The SHE is defined shortly as 
\begin{equation}
\partial \cZ = \frac{1}{2} \Delta \cZ +\cZ \cW,
\end{equation}
with the initial condition $\cZ(0,x)= \delta_0(x)$ and $\cW$ is a space-time white noise. This equation is well-posed and is related to the  Kardar-Parisi-Zhang (KPZ) equation equation, via the transformation $\cH=\log \cZ$. The following theorem by Alberts-Khanin-Quastel (\cite{AKQ14}) shows that the solution of SHE can be obtained as the limit of the renormalized partition function of discrete directed polymers.
\begin{theorem}
	For each $t>0$ and $x\in \bbR$, we have the convergence in law 
	\begin{equation}\label{pro:conv}
	\cZ_n(t,x) \to \sqrt{4\pi} e^{\frac{x^2}{4t}} \cZ(2t,x),
	\end{equation}
	where
	\begin{equation*}
	\cZ_n(t,x) := \frac{Z_{tn}(x\sqrt{n},\beta_n=n^{-1/4})}{\bbE Z_{tn}(x\sqrt{n},\beta_n=n^{-1/4})},
	\end{equation*}
	and $Z_{tn}(x\sqrt{n},\beta_n=n^{-1/4})$ is the point-to-point partition function at $x\sqrt{n}$ with the  temperature $n^{-1/4}$.
\end{theorem}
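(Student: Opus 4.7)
The plan is to follow the polynomial chaos approach of Alberts--Khanin--Quastel. The first step is to expand the normalized partition function into a sum of discrete Wiener chaoses. Writing
\begin{equation*}
e^{\beta_n \omega(i,y) - \lambda(\beta_n)} = 1 + \xi_n(i,y),
\end{equation*}
with $\xi_n(i,y)$ centered and of variance $\sigma_n^2 = e^{\lambda(2\beta_n)-2\lambda(\beta_n)}-1 \sim \beta_n^2 = n^{-1/2}$, the ratio $\cZ_n(t,x)$ decomposes (by expanding the product $\prod_{i}(1+\xi_n(i,x_i))$ and interchanging the sum and the expectation) as
\begin{equation*}
\cZ_n(t,x) = \sum_{k=0}^{\lfloor tn\rfloor} \Psi_n^{(k)}(t,x),
\end{equation*}
where $\Psi_n^{(k)}(t,x)$ is a sum over increasing $k$-tuples of times $0<i_1<\cdots<i_k\leq tn$ and spatial points $y_1,\ldots,y_k$, weighted by
\begin{equation*}
\frac{P(x_{i_1}=y_1,\ldots,x_{i_k}=y_k,\ x_{\lfloor tn\rfloor}=x\sqrt n)}{P(x_{\lfloor tn\rfloor}=x\sqrt n)}\prod_{j=1}^{k}\xi_n(i_j,y_j).
\end{equation*}

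The second step is to prove, for each fixed $k$, that $\Psi_n^{(k)}(t,x)$ converges in law to the corresponding $k$-th Wiener chaos of the space-time white noise. Rescaling time by $n$ and space by $\sqrt n$, the discrete transition kernels converge to Gaussian heat kernels by the local central limit theorem, and the factor $\sqrt{4\pi}\,e^{x^2/(4t)}$ arises from the denominator $P(x_{\lfloor tn\rfloor}=x\sqrt n)$. Combining this with an invariance principle for polynomial chaoses of Lindeberg--Mossel--O'Donnell--Oleszkiewicz type (applied to the independent family $\xi_n$), one obtains convergence in law to $\sqrt{4\pi}\,e^{x^2/(4t)}\,I_k(q_k^{t,x})$, where $I_k$ denotes the $k$-fold stochastic integral against space-time white noise on $[0,2t]\times\bbR$ and $q_k^{t,x}$ is the iterated heat kernel connecting $(0,0)$ to $(2t,x)$ through $k$ intermediate space-time points.

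The third step is to control the chaos tail uniformly in $n$. By orthogonality of the chaoses and discrete Chapman--Kolmogorov, one expects a bound of the schematic form
\begin{equation*}
\var\bigl(\Psi_n^{(k)}(t,x)\bigr) \leq \frac{C(t)^{k}}{(k!)^{1/2}},
\end{equation*}
so that the truncated remainder $\sum_{k>K}\Psi_n^{(k)}(t,x)$ has $L^2$-norm vanishing as $K\to\infty$, uniformly in $n$. This reduces the convergence to finitely many chaos terms, which converge jointly by the previous step.

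Finally, the limit is identified with $\sqrt{4\pi}\,e^{x^2/(4t)}\cZ(2t,x)$ by matching the resulting Wiener chaos series term-by-term with the known chaos expansion of the mild solution of the stochastic heat equation with delta initial data; the appearance of $2t$ rather than $t$ reflects the diffusivity convention of the simple random walk relative to the $\tfrac12\Delta$ in the SHE. The main obstacle will be the third step: establishing the sharp, $n$-uniform variance bound in the presence of the endpoint constraint $x_{\lfloor tn\rfloor}=x\sqrt n$, which forces one to work with conditioned heat kernel ratios and to invoke a quantitative local central limit theorem rather than just the pointwise one used in step two.
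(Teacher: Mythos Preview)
The paper does not actually prove this theorem: it is quoted verbatim as a result of Alberts--Khanin--Quastel \cite{AKQ14} and then used as a black box to pass the $m$-tree Laplace-transform bound to the limit. There is therefore no ``paper's own proof'' to compare against.

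That said, your sketch is a faithful outline of the original argument in \cite{AKQ14}: the polynomial chaos expansion of the normalized point-to-point partition function, termwise convergence of each $k$-th chaos via the local CLT and a Lindeberg-type invariance principle, and an $L^2$ tail bound uniform in $n$ to justify truncation. Two small caveats. First, the variance estimate you write as $C(t)^k/(k!)^{1/2}$ is schematic; in the actual AKQ argument the uniform bound comes from controlling the $k$-fold discrete heat-kernel convolution against the conditioned endpoint, and the precise growth in $k$ requires some care (it is this step, as you correctly flag, that carries the real work). Second, your statement that the invariance principle is ``of Lindeberg--Mossel--O'Donnell--Oleszkiewicz type'' is anachronistic for \cite{AKQ14} itself, which handles the noise convergence more directly via $U$-statistic/multilinear CLT arguments; the systematic polynomial-chaos Lindeberg framework was packaged slightly later. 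None of this affects the validity of the sketch as a roadmap.
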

\textbf{Remark.} To keep the notation simple, we denote the endpoint by $x\sqrt{n}$ by $\lfloor x\sqrt{n} \rfloor$, though it is understood as a lattice point.

By Theorem \ref{thm: main}, we can establish that for all endpoint $x\sqrt{n}$:
\begin{equation}\label{ineq:p2p}
Z_{tn}(x\sqrt{n},\beta_n=n^{-1/4})\leqLt Z^{\textrm{m-tree}}_{tn}(x\sqrt{n}, n ^{-1/4}),
\end{equation}
which implies 
\begin{equation*}
\frac{Z_{tn}(x\sqrt{n},\beta_n=n^{-1/4})}{\bbE Z_{tn}(x\sqrt{n},\beta_n=n^{-1/4})}\leqLt \frac{Z^{\textrm{m-tree}}_{tn}(x\sqrt{n}, n ^{-1/4})}{\bbE Z^{\textrm{m-tree}}_{tn}(x\sqrt{n}, n ^{-1/4})}.
\end{equation*}
By taking $n\to \infty$, the left hand side converges to $\sqrt{4\pi} e^{\frac{x^2}{4t}} \cZ(2t,x)$ as in (\ref{pro:conv}). For the right hand side, we use   Fatou's lemma and easily get the following result:
\begin{theorem}
	We have
	\begin{equation*}
	\cZ(2t,x)\leqLt \frac{1}{\sqrt{4\pi} e^{\frac{x^2}{4t}}}\limsup_{n\to\infty} \frac{Z^{\textrm{m-tree}}_{tn}(x\sqrt{n}, n ^{-1/4})}{\bbE Z^{\textrm{m-tree}}_{tn}(x\sqrt{n}, n ^{-1/4})}.
	\end{equation*}
\end{theorem}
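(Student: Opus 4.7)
The plan is to pass to the limit $n \to \infty$ on both sides of the Laplace domination inequality for the normalized partition functions that appears just before the theorem statement. Writing $X_n = Z_{tn}(x\sqrt n, n^{-1/4}) / \bbE Z_{tn}(x\sqrt n, n^{-1/4})$ and $Y_n = Z^{\textrm{m-tree}}_{tn}(x\sqrt n, n^{-1/4}) / \bbE Z^{\textrm{m-tree}}_{tn}(x\sqrt n, n^{-1/4})$ for the two normalized partition functions, this inequality unfolds by the very definition of the Laplace transform order into
\[
\bbE[e^{-\lambda X_n}] \geq \bbE[e^{-\lambda Y_n}], \qquad n \geq 1, \ \lambda > 0.
\]

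For the left-hand side, set $c := \sqrt{4\pi}\, e^{x^2/(4t)}$. The Alberts--Khanin--Quastel convergence (\ref{pro:conv}) gives $X_n \Rightarrow c\,\cZ(2t,x)$ in distribution. Since $y \mapsto e^{-\lambda y}$ is bounded and continuous on $[0,\infty)$, the portmanteau theorem yields
\[
\lim_n \bbE[e^{-\lambda X_n}] = \bbE[e^{-\lambda c\,\cZ(2t,x)}].
\]

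For the right-hand side the sequence $(Y_n)$ is not known to converge, which is precisely why the theorem is phrased with a pointwise $\limsup$. Because $e^{-\lambda Y_n} \in [0,1]$ and $y \mapsto e^{-\lambda y}$ is continuous and decreasing, one has $\liminf_n e^{-\lambda Y_n} = e^{-\lambda\,\limsup_n Y_n}$ almost surely, so Fatou's lemma delivers
\[
\bbE\bigl[e^{-\lambda\,\limsup_n Y_n}\bigr] \leq \liminf_n \bbE[e^{-\lambda Y_n}].
\]
Chaining the three displays with the inequality $\bbE[e^{-\lambda X_n}] \geq \bbE[e^{-\lambda Y_n}]$ gives $\bbE[e^{-\lambda c\,\cZ(2t,x)}] \geq \bbE[e^{-\lambda\,\limsup_n Y_n}]$ for every $\lambda > 0$, and the substitution $\mu = \lambda c$ rewrites this as $\cZ(2t,x) \leqLt c^{-1}\,\limsup_n Y_n$, which is the claim.

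I do not foresee a serious technical obstacle: the only delicate step is handling the fact that $(Y_n)$ need not be tight, which is precisely what forces us to pass to its pointwise $\limsup$ and apply Fatou in the direction compatible with the Laplace order (note that $e^{-\lambda \cdot}$ is decreasing, which is what aligns the $\liminf$ on the Laplace side with a $\limsup$ inside the exponential). A cleaner statement phrased directly in terms of a distributional limit of $Y_n$ would require establishing convergence in law for the normalized m-tree partition function at the intermediate disorder scaling, which is a separate and more delicate problem.
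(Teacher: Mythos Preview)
Your argument is correct and follows exactly the route sketched in the paper: pass to the limit on the left using the Alberts--Khanin--Quastel convergence together with boundedness of $e^{-\lambda\,\cdot}$, and on the right apply Fatou's lemma after observing that $\liminf_n e^{-\lambda Y_n}=e^{-\lambda\limsup_n Y_n}$. The paper compresses all of this into a single sentence, and your write-up simply makes the steps explicit.
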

\begin{remark}
	The limit of the right hand side should exist as an universal object corresponding to the intermediate limit of the m-tree, this has been verified in the case of directed polymer on square lattice \cite{AKQ14} or on a hierarchical lattice \cite{ACK15}. This interesting question will not be studied in the scope of this note but in a future research. 
\end{remark}
\section{Disorder systems and Peacock}\label{sec:pecock}
In this section we will consider many disordered systems, but in a first time, in order to fix the idea let us start with the directed polymer. Recall the normalized partition function is given by
\begin{equation*}
W_n(\beta)=E\Big[e^{\beta H_n(x)-n\lambda(\beta)}\Big].
\end{equation*}
\begin{proof}[Proof of Theorem \ref{thm:second}]
	Let $\Phi:\R\to\R$ a convex function. We need to prove that the function $\beta\mapsto \bbE\big[\Phi(W_n(\beta))\big]$ is increasing in $\beta$. We have
	\begin{equation*}
	\frac{d}{d\beta} \bbE\big[\Phi(W_n(\beta))\big]=\bbE E\Big[\Phi'(W_n(\beta))\big(H_n(x)-n\lambda'(\beta)\big)e^{\beta H_n(x)-n\lambda(\beta)}\Big].
	\end{equation*}
	By Fubini, the right hand side is equal to:
	$$E\ \bbE\Big[\big(H_n(x)-n\lambda'(\beta)\big)e^{\beta H_n(x)-n\lambda(\beta)}\Phi'(W_n(\beta))\Big].$$
	For any fixed path $x$,we define the probability measure $Q^x$,
	\begin{equation*}
	\frac{dQ^{x}}{d\bbP}=e^{\beta H_n(x)-n\lambda(\beta)}.
	\end{equation*}
	Under $Q^x$, the random environment $\omega(n,x)$ are still independent, then are positively associated and these two following applications are increasing for $\beta \geq 0$:
	\begin{align*}
	(\omega)&\mapsto H_n(x,\omega),\\
	(\omega)&\mapsto \Phi'(W_n(\beta,\omega)).
	\end{align*} 
	Here we use the fact that the function $\Phi'(x)$ is increasing. By using the property of positively associated random variables we get :
	\begin{align*}
	\frac{d}{d\beta} \bbE\big[\Phi(W_n(\beta))\big]&=E\ Q^x\Big[ \big(H_n(x)-n\lambda'(\beta)\big)\Phi'(W_n(\beta))\Big]\\
	&\geq 
	E\ Q^x\Big[H_n(x)-n\lambda'(\beta)\Big]
	E\ Q^x\Big[ \Phi'(W_n(\beta))\Big].
	\end{align*} 
	But by a simple calculation we can verify that:
	$$E\ Q^x\Big[H_n(x)-n\lambda'(\beta)\Big]=0,$$
	that means
	$$\frac{d}{d\beta} \bbE\big[\Phi(W_n(\beta))\big]\geq 0,$$
	which yields the result.
\end{proof}
This result can be generalized in the scope of classical spin glass model such as
\begin{itemize}
	\item Sherrington-Kirkpatrick (SK) model
	\item Edwards-Anderson (EA) model
	\item Random field Ising (RFIM) model
\end{itemize}
Let us describe these models. The first two models deal with Ising spins with randomly distributed ferromagnetic and anti-ferromagnetic bonds $J_{ik}$. For SK model, given a configuration of $N$ Ising spins,
\begin{equation*}
\sigma =(\sigma_1,\ldots,\sigma_n)\in\{-1,+1\}^N
\end{equation*}
the Hamiltonian of the model is given by
\begin{equation*}
H_J^{SK}(\sigma)=\frac{1}{\sqrt{N}}\sum_{i,j} J_{i,j}\sigma_i \sigma_j.
\end{equation*}
where $(J_{i,j})$ are i.i.d standard Gaussian random variables.
In EA model, the sum runs only over nearest neighbour bonds $<ij>$ on the lattice:
\begin{equation*}
H_J^{EA}(\sigma)=\sum_{<ij>} J_{ij}\sigma_i \sigma_j.
\end{equation*}
In the RFIM, we consider constant  interactions $J$ with local magnetic fields $h=(h_1,h_2,\ldots,h_N)$:
\begin{equation*}
H_h= J\sum_{<ik>} \sigma_i\sigma_k+\sum_{i}^{N}h_i\sigma_i.
\end{equation*}
In all these three models, the normalized partition functions are defined by
\begin{equation*}
W_N= \frac{E\big(e^{\beta H_N}\big)}{\bbE E\big(e^{\beta H_N}\big)}
\end{equation*}
Here $\bbE$ is the measure of the random environment and $E$ is the measure of the configurations $s=(s_1,s_2,\ldots,s_N)$. 
Let us provide a proof for the SK model.
\begin{proof}[Proof for SK model]
	The proof is similar as in the directed polymer case. For a fixed configuration $\sigma$, we define a new random environment $J^\sigma$ which is given by 
	$$J_{ik}^\sigma= J_{ik}\sigma_i\sigma_k$$
	Then we can define the probability measure $Q^\sigma$
	\begin{equation*}
	\frac{dQ^{\sigma}}{d\bbP}=e^{\frac{\beta}{\sqrt{N}} \sum_{ik} J^{\sigma}_{ik}-\frac{(N-1)}{2}\lambda(\beta)}.
	\end{equation*}
	where $\lambda(\beta)= \log \bbE(e^{\beta J_{ik}}) <\infty$.
	Under $Q^\sigma$, the random variables $J^\sigma_{ik}$ are still independent, then they are positively associated and these two following applications are increasing for $\beta \geq 0$:
	\begin{align*}
	(J^\sigma_{ik})&\mapsto \sum_{ik} J^\sigma_{ik},\\
	(J^\sigma_{ik})&\mapsto \Phi'(e^{\frac{\beta}{\sqrt{N}} \sum_{ik} J^{\sigma}_{ik}-\frac{(N-1)}{2}\lambda(\beta)}).
	\end{align*} 
	These facts can easily yield the desired result as in the previous proof.
\end{proof}
We come back to the polymer case and consider the problem of finding a martingale corresponding to the normalized partition function $W_n(\beta)$ in certain cases. Let first consider the Bernoulli environment, i.e. when $\omega(i,x)$ are  symmetric Bernoulli random variables. In that case is elementary to check that:
\begin{equation}\label{re:ber}
\frac{e^{\beta \omega(i,x)}}{\bbE(e^{\beta \omega(i,x)})}=1+\omega(i,x)\tanh \beta.
\end{equation}

And if we define the stopping time $T_\beta= \inf\{t, \vert B_t\vert =\tanh\beta\}$, then 
$$\frac{e^{\beta \omega(i,x)}}{\bbE(e^{\beta \omega(i,x)})}=1+B(T_\beta),$$
with $B$ a Brownian motion. 
Hence given a family of independent Brownian motions $\{B_{i,x},\  i\in \bbN,\ x\in \bbZ^d\}$ then we  define 
\begin{equation}
M_n(\beta):= \sum_{i=1}^n \prod_{x\in\Pi_n} \Big(1+B_{i,x_i}\big(T_\beta(i,x_i)\big)\Big),
\end{equation}  
where $\Pi_n$ is the set of paths of length $n$ and $T_\beta(i,x)$ is the stopping time associated with the Brownian motion $B_{i,x}$. Since the product of  independent martingales is still a martingale respect to the product filtration then $M_n(\beta)$ is clearly a martingale.
  
Actually the key ingredient in this construction is the identity (\ref{re:ber}), which can be generalized with other choices of distributions. One can find this list of distribution in the Section Open Problems in 
\cite{HCY11} (page 359).

{\bf Acknowledgement.} 
The author would like to thank his thesis advisor Francis Comets for his support  and  comments.


\begin{thebibliography}{10}
	
	\bibitem[1]{A90}
	R.~Adler, 
	An introduction to continuity, extrema, and related topics for general Gaussian processes,
	{\em IMS Lecture Notes Monogr. Ser.} Volume 12, 1990
	
		\bibitem[1]{ACK15}
		T.~Alberts, J.~Clark, S.~Kocic,
		The intermediate disorder regime for a directed polymer model on a hierarchical lattice.
		{\em http://arxiv.org/abs/1508.04791} (2015).
		
		
	\bibitem[2]{AKQ14}
	T.~Alberts, K.~Khanin, J.~Quastel,
	The intermediate disorder regime for directed polymers in dimension 1+1,
	{\em Ann. Probab.},  42(3): 1212-1256 (2014).
	
	\bibitem[3]{BB03}
	F.~Baccelli, P. ~Brémaud,
	Elements of Queueing Theory,
	{\em Springer Verlag}, Applications of Mathematics, 1994. Second edition 2003.

	
	 \bibitem[4]{CV06}
	 F. Comets, V. Vargas,  Majorizing multiplicative cascades for directed polymers in random media.
	 {\em  ALEA Lat. Am. J. Probab. Math. Stat. }
	 2 (2006), 267–277.
	 
	 \bibitem[5]{CD90}
	J. Cook, B. Derrida, Directed polymers in a random medium: 1/d expansion and the n-tree approximation.
	{\em J. Phys. A. }
	  1523-1554 (1990).
	  
	  \bibitem[6]{C12}
	  I.~Corwin,
	  The Kardar-Parisi-Zhang equation and universality class,
	  {\em Random matrices: Th. and Appl.}
	  (2012) Vol. 01, Issue 01.
	  
	   \bibitem[7]{De01}
	   M. Denuit, Laplace transform ordering of actuarial quantities.
	   {\em  Insurance: Mathematics and Economics} 29 (2001) 83–102.
	   
	   \bibitem[8]{DS88}
	  B. Derrida, H. Spohn, Polymers on disordered trees, spin glasses and travelling waves.
	  {\em  J. Stat. Phys.} 51, 817-840 (1988).
	  
	  \bibitem[9]{Fe66}
	   W.~Feller.  An Introduction to Probability Theory and its Applications. 
	    {\em  Wiley, New York.}
	   \bibitem[10]{HCY11}
	   F. Hirsch, C. Profeta, B. Roynette, M. Yor. Peacocks and Associated Martingales, with Explicit Constructions.
 	   In: {\em Bocconi  Springer Series, Springer, Milan.}
	   (2011) .
	   	\bibitem[11]{HH85}
	   	D.A.~Huse, C.L.~Henley,
	   	Pinning and roughening of domain wall in Ising systems due to random impurities,
	   	{\em Phys. Rev. Lett.}(1985), 54:2708–2711
	   	
	   \bibitem[12]{Ka86}
	   J.P. Kahane, Une in\'egalit\'e du type de Slepian et Gordon sur les
	   processus gaussiens.
	   {\em   Israel J. Math.} 55 (1986), 109–110.
	   
	   \bibitem[13]{Ke72}
	   H.G.~ Kellerer, Markov-Komposition und eine Anwendung auf Martingale.
	   {\em   Math. Ann.} 198, 99–122 (1972).
	   
	     \bibitem[14]{Lin98}
	    G.D.Lin,  Characterizations of the L-class of life distributions.
	     {\em   Statist. Probab. Lett.} 40, 259--266 (1998).
	     
	      \bibitem[15]{Liu00}
	      Liu.Q,  On generalized multiplicative cascades.
	      {\em  Stochastic Processes
	      	and their Applications.} 86, (2000), 263-286.
	      
	      \bibitem[16]{SS07}
	     M. Shaked, G. Shanthikumar,  Stochastic Orders.
	      {\em  Springer Series in Statistics.} (2007).
	      
	      
	      \bibitem[17]{RV13}
	      R. Rhodes, V. Vargas,
	      Gaussian multiplicative chaos and applications: a review,
	      {\em Probability Surveys }  (2015)
	      
\end{thebibliography}
\end{document}